\numberwithin{equation}{section}
\newtheorem{theorem}{Theorem}[section]
\newtheorem{proof}{Proof}[section]
\newtheorem{lemma}[theorem]{Lemma}
\newtheorem{proposition}[theorem]{Proposition}
\newtheorem{definition}[theorem]{Definition}
\newcommand{\om}{\Omega}
\begin{document}

\title{Defect-Deferred Correction Method Based on a Subgrid Artificial Viscosity Modeling}

\author{Mustafa Aggul}
\ead{mustafaaggul@hacettepe.edu.tr}

\address{Department of Mathematics, Hacettepe University, 06800, Ankara, Turkey.}
\vspace{0.2in}

\date{\today}

\begin{abstract}  

An alternative first step approximation based on subgrid artificial viscosity modeling (SAV) is proposed for defect-deferred correction method (DDC) for incompresible Navier-Stokes equation at high Reynolds number. This new approach not only preserves all qualifications of the conventional artificial viscosity (AV) based DDC, such as unconditional stability, high order of accuracy and so on, it has also shown its superiority over choosing AV approximation in the predictor step. Both theory and computational results presented in this paper illustrate that this alternative approach indeed increases the efficiency of the DDC method.

\end{abstract}

\begin{keyword}
high Reynolds number \sep defect-correction \sep deferred-correction \sep subgrid artificial viscosity \sep variational multiscale
\end{keyword}

\maketitle


\section{Introduction}

In this report, for the pair of unknown velocity $\mathbf{u}$ and pressure $\mathbf{p}$, we consider the incompressible Navier-Stokes equation (NSE) \ref{NSE} at high Reynolds number ($Re^{-1} \propto \nu$). According to Kolmogorov's K41 Theory \cite{K41}, as Reynolds number increases required computational cost raises prohibitively high. Attempting to solve the problem directly with an affordable computational cost(on a much coarser mesh than required) usually causes related linear systems to converge too slowly, or even if they converge within a reasonable time frame, their results are far from being realistic. 

\begin{eqnarray}\label{NSE}
\mathbf{u}_{t} -\nu \Delta \mathbf{u} +\mathbf{u}\cdot \nabla \mathbf{u}+\nabla \mathbf{p}=f,\notag\\
\nabla \cdot \mathbf{u}=0.
\end{eqnarray}

Various techniques including the defect correction have been introduced to mitigate this issue, see \cite{LLP02,ELM00,L08}. Also a recent work combining this correction approach with a deferred correction (see e.g. \cite{DGR00,KG02,M03,M04}) for an increased temporal accuracy have been proposed in \cite{AL16}. Methods on both of these papers are based on a predictor-corrector scheme: As a predictor step, an approximation is found by a computationally very attractive artificial viscosity(AV) approximation with a backward-Euler time discretization, and as for the corrector step, the affect of the AV is subtracted via previously found predictor step approximation. In particular, see the following scheme for the artificial viscosity based defect-deferred correction method(AV-DDC) presented in \cite{AL16}:

\begin{equation}\label{AVapp}
\begin{split}
(\frac{u_1^{h,n+1} - u_1^{h,n}}{k}, v^h) + (\nu + h)(\nabla u_1^{h,n+1},\nabla v^h) + b^{\ast }(u_1^{h,n+1}, u_1^{h,n+1}, v^h) \\- (p_1^{h,n+1},\nabla \cdot v^h) = (f(t_{n+1}), v^h)
\end{split}
\end{equation}

\begin{equation}\label{CSapp}
\begin{split}
(\frac{u_2^{h,n+1}-u_2^{h,n}}{k},v^h)+(\nu + h)(\nabla u_2^{h,n+1},\nabla v^h) + b^{\ast }(u_2^{h,n+1}, u_2^{h,n+1}, v^h)
\\
- (p_2^{h,n+1},\nabla \cdot v^h)
=(\frac{f(t_{n+1})+f(t_{n})}{2},v^h)
+\frac{\nu}{2}k(\nabla (\frac{u_1^{h,n+1}-u_1^{h,n}}{k}),v^h)\\
+\frac{1}{2}b^{\ast }(u_1^{h,n+1}, u_1^{h,n+1}, v^h)
-\frac{1}{2}b^{\ast }(u_1^{h,n}, u_1^{h,n}, v^h)
+h(\nabla u_1^{h,n+1},\nabla v^h),
\end{split}
\end{equation}

where $b^{\ast }(\cdot,\cdot,\cdot)$ is the explicitly
skew-symmetrized trilinear form, defined below.

AV-DDC beside being an efficient method, it has been successfully applied to various problems including two-domain convection-dominated convection diffusion problem and nonlinearly-coupled fluid-fluid interaction, see \cite{EL16}, \cite{ACEL18}. In all of these papers, it has been shown to be an unconditionally-stable, high-accuracy regularization technique (a second order in time and space). It is also parallelizable for a faster result since only data transfer required for the correction step is the AV solutions on the current and previous time steps. Therefore, one can easily run the scheme in parallel as long as AV approximation marches only two time steps earlier than the correction steps.

On the other hand, the accuracy of the correction step approximation is strongly dependent on the accuracy of the predictor step, in general. Especially, for AV-DDC methods, accuracy of the correction step is lifted by an order of 1 due to the multiplication of $h$ in the laplacian of the AV approximation, see the last term of the equation \ref{CSapp}. Also AV approximation is known to be too dissipative (in all scales) so that it cannot capture turbulent characteristics of the flow and results in a fully-laminar flow, e.g. see \cite{AL16}. Therefore, replacing the predictor AV step with a less dissipative and high-accuracy approximation fosters the overall accuracy of the first step approximation and, in consequence, correction step approximation produces better solutions. In this report, AV approximation in the first step will be replaced with a projection-based subgrid artificial viscosity method (SAV) to further increase the accuracy of the correction step approximation, see e.g. \cite{JK05,HMJ00,C01,JKL06,KLR06} for SAV and its inspiration source variational multiscale methods (VMS).

Hence, the replacement of \ref{AVapp} with \ref{VMSapp} for the predictor step is proposed, and the new defect-deferred correction based on SAV is abbreviated to SAV-DDC. In contrast to commonly used coupled (implicit) form of SAV in the literature this replacement decouples the projection step from the NSE for computational efficiency. Although this decoupling comes with an extra $O(\Delta t)$ error, decoupled SAV still meets with our expectations from the predictor step approximation since it only has to be first order of accuracy as in AV approximation. One can also employ the implicit form of SAV for possibly better accuracies.

\begin{eqnarray}\label{VMSapp}
(\frac{u_1^{h,n+1} - u_1^{h,n}}{k}, v^h) + (\nu + h)(\nabla u_1^{h,n+1},\nabla v^h) + b^{\ast }(u_1^{h,n+1}, u_1^{h,n+1}, v^h) \nonumber\\
- (p_1^{h,n+1},\nabla \cdot v^h) = (f(t_{n+1}), v^h) +h(\mathbb{G}_1^{\mathbb{H},n},\nabla v^{h}),\\
(\mathbb{G}_1^{\mathbb{H},n}-\nabla u_1^{h,n}, \mathbb{L}^H)=0.\label{projection} 
\end{eqnarray}

The equation \ref{projection} means that $\mathbb{G}_1^{\mathbb{H},n}$ is the projection of $\nabla u_1^{h,n}$ on a coarse mesh. Consider 

$$(\nu + h)(\nabla u_1^{h,n+1},\nabla v^h) - h(\mathbb{G}_1^{\mathbb{H},n},\nabla v^h) = \nu (\nabla u_1^{h,n+1},\nabla v^h) + h(\nabla u_1^{h,n+1}-\mathbb{G}_1^{\mathbb{H},n},\nabla v^h).$$ 

Roughly ignoring the difference in the corresponding time levels, the last term corresponds to the gradient of the small scales that would disappear upon the projection onto the given coarse mesh. Therefore, we infer that the dissipative affect of the artificial viscosity is only introduced on small scales and it acts solely indirectly on large scales, see \cite{JK05} for details. This distinction results in resolving large eddies (the ultimate goal of most practitioners) with a much higher accuracy with SAV than AV approximation, which acts on all scales regardless of their sizes.

The paper is organized as follows: Section $\ref{prelim}$ introduces the necessary notation and preliminaries; then Section 3 follows with the accuracy and stability of the first step SAV approximation. The main theoretical results of the proposed replacement, SAV-DDC appears in Section 4, where stability and increased accuracy (both time and space) of the correction step is studied. Computational comparison tests are presented in Section 5.


\section{Mathematical Preliminaries and Notations}\label{prelim}
Throughout this paper, the norm $||.||$ denotes the usual $L^2(\Omega)$ norm of scalars, vectors and tensors, induced by the usual $L^2$ inner-product, denoted by $(\cdot,\cdot)$. The space in which velocity sought(at time $t$) is

\[X=H_0^1(\Omega)^d=\{v \in L^2(\Omega)^d:\nabla v \in L^2(\Omega)^{dxd} \mbox{ and } v=0 \mbox{ on } \partial \Omega \}. \]
with the norm $||v||_X=||\nabla v||.$ The space dual to $X$ is equipped with the norm
\[||f||_{-1}=\sup_{v \in X} \frac{(f,v)}{||\nabla v||}. \]

The space that pressure (at time $t$) belongs to is

\[Q=L_0^2(\Omega)=\{q \in L^2(\Omega):\int_{\Omega} q(x)dx=0 \}. \]

Introduce the space of weakly divergence-free functions

\[X \supset V=\{v \in X: (\nabla \cdot v,q)=0, \forall q\in Q  \}.\]

For measurable $v:[0,T] \rightarrow X $, we define
\[||v||_{L^p(0,T;X)}=(\int_{0}^T ||v||_X^Pdt)^\frac{1}{p}, 1 \leq p < \infty, \]
and
\[||v||_{L^\infty(0,T;X)}=ess \sup_{0\leq t \leq T} ||v(t)||_X. \]
Define the trilinear form on $X\times X \times X$
\[b(u,v,w)=\int_{\Omega}u\cdot \nabla v \cdot w dx.\]

The following lemma is also necessary for the analysis.
\begin{lemma}\label{prelim1}
There exist finite constant $M=M(d)$ and $N=N(d)$ s.t. $M \geq N$ and
\[M=\sup_{u,v,w \in X} \frac{b(u,v,w)}{||u||||v||||w||} < \infty, N=\sup_{u,v,w \in V} \frac{b(u,v,w)}{||u||||v||||w||} < \infty. \]
\end{lemma}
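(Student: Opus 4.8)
The plan is to split the claim into two parts: the finiteness of $M$, which is a boundedness estimate for the trilinear form and is where essentially all the work sits, and then the inequality $M \ge N$ together with the finiteness of $N$, both of which follow for free from the set inclusion $V \subset X$. Throughout I read the norms in the denominators as the $X$-norms $\|\nabla\cdot\|$ (consistent with $u,v,w\in X$ and the standard NSE theory), and the two analytic tools I would use are H\"older's inequality and the Sobolev embedding of $H_0^1(\Omega)$ into $L^4(\Omega)$.

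First I would bound $b$ directly. Writing $b(u,v,w)=\sum_{i,j}\int_\Omega u_i\,(\partial_i v_j)\,w_j\,dx$ and applying H\"older's inequality with the exponent triple $(4,2,4)$, whose reciprocals sum to one, gives
\[
|b(u,v,w)| \le \|u\|_{L^4}\,\|\nabla v\|\,\|w\|_{L^4}.
\]
Next I would invoke the Sobolev embedding $H_0^1(\Omega)\hookrightarrow L^4(\Omega)$, valid for the physically relevant dimensions $d\in\{2,3\}$ (more generally $d\le 4$), which, combined with the Poincar\'e inequality on $H_0^1$, yields a constant $C_S=C_S(d)$ with $\|w\|_{L^4}\le C_S\|\nabla w\|$. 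Substituting this for the $u$ and $w$ factors produces
\[
|b(u,v,w)| \le C_S^2\,\|\nabla u\|\,\|\nabla v\|\,\|\nabla w\|.
\]
Dividing by $\|\nabla u\|\,\|\nabla v\|\,\|\nabla w\|$ and taking the supremum over $u,v,w\in X$ shows $M\le C_S^2(d)<\infty$, which also exhibits the advertised dependence of the constant on the dimension $d$.

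Finally, since every weakly divergence-free function lies in $X$, that is $V\subset X$, the supremum defining $N$ is taken over a subset of the set over which $M$ is the supremum; hence $N\le M$, giving at once both the ordering $M\ge N$ and the finiteness $N<\infty$.

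I do not expect a genuine obstacle here: the argument is a standard functional-analytic estimate. The only point requiring care is the Sobolev embedding step, which imposes the (tacit) restriction $d\le 4$ and is precisely where the dimensional dependence $M=M(d)$, $N=N(d)$ enters; everything else is bookkeeping with the H\"older exponents and the trivial monotonicity of the supremum under set inclusion.
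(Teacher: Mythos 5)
Your proof is correct and is essentially the argument the paper points to: the paper offers no proof of this lemma at all, deferring to the citation \cite{GR79} (Girault--Raviart), where the bound is obtained by exactly your reasoning --- H\"older's inequality with exponents $(4,2,4)$, the Sobolev embedding $H_0^1(\Omega) \hookrightarrow L^4(\Omega)$ for $d \le 4$ together with Poincar\'e, and then $N \le M < \infty$ from the inclusion $V \subset X$. Your decision to read the denominators as $\|\nabla u\|\,\|\nabla v\|\,\|\nabla w\|$ is also the correct repair of the statement's notation: with genuine $L^2$ norms, as the paper's stated conventions would literally dictate, the supremum would be infinite, so the gradient-norm reading is the one intended and the one used throughout the paper's subsequent estimates.
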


The proof can be found, for example, in ~\cite{GR79}. The
corresponding constants $M^h$ and $N^h$ are defined by replacing $X$
by the finite element space $X^h \subset X$ and $V$ by $V^h \subset
X$, which will be defined below. Note that $M \ge \max(M^h, N, N^h)$
and that as $h \rightarrow 0$, $N^h \rightarrow N$ and $M^h
\rightarrow M$ (see ~\cite{GR79}).

Throughout the paper, we shall assume that the velocity-pressure
finite element spaces $X^h \subset X$ and $Q^h \subset Q$ are
conforming, have typical approximation properties of finite element
spaces commonly in use, and satisfy the discrete inf-sup, or
$LBB^h$, condition
\begin{eqnarray} \label{inf_sup}
\inf_{q^h \in Q^h} \sup_{v^h \in X^h} \frac{(q^h, \nabla \cdot
v^h)}{\|\nabla v^h\|\|q^h\|} \ge \beta^h > 0,
\end{eqnarray}
where $\beta^h$ is bounded away from zero uniformly in $h$. Examples
of such spaces can be found in ~\cite{GR79}. We shall consider $X^h
\subset X$, $Q^h \subset Q$ to be spaces of continuous piecewise
polynomials of degree $m$ and $m-1$, respectively, with $m \ge 2$.
The case of $m=1$ is not considered, because the optimal error
estimate (of the order $h$) is obtained after the first step of the
method - and therefore the DCM in this case is reduced to the
artificial viscosity approach.

The space of discretely divergence-free functions is defined as
follows
\begin{eqnarray*}
V^h = \{ v^h \in X^h : (q^h, \nabla \cdot v^h) = 0, \forall q^h \in
Q^h \}.
\end{eqnarray*}

In the analysis we use the properties of the following Modified
Stokes Projection
\begin{definition} [Modified Stokes Projection] \label{Modified Stokes Projection}
Define the Stokes projection operator $P_S$: $(X,Q) \rightarrow
(X^h, Q^h)$, $P_S(u,p) = (\tilde u, \tilde p)$, satisfying
\begin{eqnarray} \label{jan1}
(\nu + h)(\nabla(u - \tilde u), \nabla v^h) - (p - \tilde p,
\nabla \cdot v^h) = 0,\\
\nonumber (\nabla \cdot (u - \tilde u), q^h)=0,
\end{eqnarray}
for any $v^h \in V^h, q^h \in Q^h$.
\end{definition}

In $(V^h, Q^h)$ this formulation reads: given $(u,p) \in (X,Q)$,
find $\tilde u \in V^h$ satisfying
\begin{eqnarray} \label{jan5}
(\nu + h)(\nabla(u - \tilde u), \nabla v^h) - (p - q^h, \nabla
\cdot v^h) = 0,
\end{eqnarray}
for any $v^h \in V^h, q^h \in Q^h$.

Define the explicitly skew-symmetrized trilinear form
\begin{equation*}
b^{\ast }(u,v,w):=\frac{1}{2}(u \cdot \nabla v,w)-\frac{1}{2}(u
\cdot \nabla w,v).
\end{equation*}
The following estimate is easy to prove (see, e.g., ~\cite{GR79}):
there exists a constant $C=C(\Omega)$ such that
\begin{equation} \label{nonlin_usual}
|b^{\ast }(u,v,w)| \le C(\Omega)\|\nabla u\|\|\nabla v\|\|\nabla w\|.
\end{equation}

The proofs will require the sharper bound on the nonlinearity. This
upper bound is improvable in $R^{2}$.
\begin{lemma} [The sharper bound on the nonlinear term]\label{nonlinear_bound}
Let $\Omega \subset R^{d},$
$d=2,3.$ For all $u,v,w\in X $
\[
|b^{\ast}(u, v, w)| \le  C(\Omega)\sqrt{\| u \|\| \nabla u\|}\|
\nabla v\|\| \nabla w\|.
\]
\end{lemma}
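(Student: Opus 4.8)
The plan is to reduce everything to the raw trilinear form $b(u,v,w)=\int_\Omega u\cdot\nabla v\cdot w\,dx$ and then to bound it by a choice of Hölder exponents that isolates $u$ as the factor carrying the $\sqrt{\|u\|\,\|\nabla u\|}$ weight. Since the given definition unfolds as $b^{\ast}(u,v,w)=\tfrac{1}{2}b(u,v,w)-\tfrac{1}{2}b(u,w,v)$, it suffices to establish
\[
|b(u,v,w)|\le C(\Omega)\,\sqrt{\|u\|\,\|\nabla u\|}\,\|\nabla v\|\,\|\nabla w\|
\]
for all $u,v,w\in X$; the second term is the same with $v$ and $w$ interchanged, and it yields the identical right-hand side because the target is symmetric in $v$ and $w$.

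First I would apply Hölder's inequality to $b(u,v,w)$ with the exponent triple $(3,2,6)$, placing $u$ in $L^3$, $\nabla v$ in $L^2$, and $w$ in $L^6$ (note $\tfrac{1}{3}+\tfrac{1}{2}+\tfrac{1}{6}=1$), giving $|b(u,v,w)|\le \|u\|_{L^3}\,\|\nabla v\|\,\|w\|_{L^6}$. Next I would interpolate the $L^3$ norm between $L^2$ and $L^6$, using $\|u\|_{L^3}\le \|u\|^{1/2}\|u\|_{L^6}^{1/2}$ (the interpolation exponent $\theta=\tfrac{1}{2}$ solves $\tfrac{1}{3}=\tfrac{\theta}{2}+\tfrac{1-\theta}{6}$). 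The Sobolev embedding $H^1_0(\Omega)\hookrightarrow L^6(\Omega)$, valid for $d=2,3$, then supplies $\|u\|_{L^6}\le C(\Omega)\|\nabla u\|$ and $\|w\|_{L^6}\le C(\Omega)\|\nabla w\|$. Chaining these produces
\[
|b(u,v,w)|\le C(\Omega)\,\|u\|^{1/2}\|\nabla u\|^{1/2}\,\|\nabla v\|\,\|\nabla w\|,
\]
which is exactly the claimed estimate on $b$.

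The term $b(u,w,v)$ is treated identically with the roles of $v$ and $w$ swapped, yielding the same bound; summing the two contributions with their factors $\tfrac{1}{2}$ gives the lemma, with $C(\Omega)$ absorbing the Sobolev (and, if one prefers, Poincaré) constants, all of which depend on $d$. The one genuine decision point—and the step I would flag as the crux—is the Hölder split itself: one must send the distinguished argument $u$ into $L^3$, so that interpolation generates precisely the $\|u\|^{1/2}\|\nabla u\|^{1/2}$ weight, while the remaining gradient-free argument goes into $L^6$. The more familiar $(4,2,4)$ split is tempting but inadequate in three dimensions, where the Ladyzhenskaya inequality forces $\|u\|_{L^4}\le C\|u\|^{1/4}\|\nabla u\|^{3/4}$ and hence the wrong exponent; working directly with the $L^6$ embedding has the further advantage of delivering a single argument uniform in $d=2,3$.
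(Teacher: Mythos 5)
Your proof is correct. The paper gives no argument for this lemma at all---its ``proof'' is simply a citation to Girault--Raviart---and your chain of Hölder with exponents $(3,2,6)$, interpolation $\|u\|_{L^3}\le \|u\|^{1/2}\|u\|_{L^6}^{1/2}$, and the Sobolev embedding $H_0^1(\Omega)\hookrightarrow L^6(\Omega)$ (valid for $d=2,3$, with Poincaré absorbed into $C(\Omega)$) is precisely the standard argument supplied by that reference; your closing observation that the $(4,2,4)$ split fails in three dimensions, where Ladyzhenskaya gives $\|u\|_{L^4}\le C\|u\|^{1/4}\|\nabla u\|^{3/4}$ and hence the wrong exponent on $u$, is also accurate and is consistent with the paper's remark that the bound is improvable in $R^{2}$.
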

\begin{proof}
See ~\cite{GR79}.
\end{proof}

We will also need the following inequalities: for any $u \in V$
\begin{eqnarray} \label{interp1}
\inf_{v \in V^h}\|\nabla (u - v)\| \le C(\Omega)\inf_{v \in
X^h}\|\nabla (u - v)\|,
\end{eqnarray}
\begin{eqnarray} \label{interp2}
\inf_{v \in V^h}\|u - v\| \le C(\Omega)\inf_{v \in X^h}\|\nabla (u -
v)\|,
\end{eqnarray}

The proof of (\ref{interp1}) can be found, e.g., in ~\cite{GR79},
and (\ref{interp2}) follows from the Poincare-Friedrich's inequality
and (\ref{interp1}).

We will also assume that the inverse inequality holds: there exists a constant $C$ independent of $h$, such that

\begin{equation}\label{inverseinequality}
||\nabla v|| \leq Ch^{-1}||v||, \mbox{   } \forall v \in X^h.
\end{equation}

Define also the number of time steps $N := \frac{T}{k}$.

We will use the error decomposition
\begin{equation}\label{errordecomposition}
\begin{split}
e_\ell^{i}=u^i-u_\ell^{h,i}=u^i-\tilde{u}^i+\tilde{u}^i-u_\ell^{h,i}=\eta_\ell^i-\phi_\ell^{h,i},\\
\mbox{where } \tilde{u}^i \in V^h \mbox{ is some projection of } u^i \mbox{ onto } V^h,\\
\mbox{and } \eta_\ell^i=u^i-\tilde{u}^i, \phi_\ell^{h,i}=u_\ell^{h,i}-\tilde{u}^i, \phi_\ell^{h,i} \in V^h, \forall i,\forall \ell=1,2.
\end{split}
\end{equation}

The  $L^2$ projection is defined in the usual way.
\begin{definition} \label{defpro}
The $L^2$ projection ${P}^H $of a given function $\mathbb{L}$ onto the finite element space $L^H$ is the solution of the following : find $\bar{\mathbb{L}}= {P}^H \mathbb{L}\in L^H$ such that
\begin{eqnarray} \label{pro}
(\mathbb{L}-{P}^H \mathbb{L}, S_H)=0,
\end{eqnarray}
for all $S_H \in L^H$.
\end{definition}
Hence, we get
\begin{eqnarray}\label{pro2}
    \|I-{P}^H \| &\leq& 1, \\
	\|(I-{P}^H) \mathbb{L}\| &\leq& CH^k\|\mathbb{L}\|_{k+1},
\end{eqnarray}
for all $\mathbb{L} \in (L(\Omega_i))^{d\times d}\cap (H^{k+1}(\Omega_i))^{d\times d}$.

We conclude the preliminaries by formulating the discrete Gronwall's
lemma, see, e.g. ~\cite{HR90}
\begin{lemma} \label{prelim02}
Let $k,B$, and $a_\mu, b_\mu, c_\mu, \gamma_\mu,$ for integers $\mu \ge 0$, be nonnegative numbers such that:
\[
a_n + k\sum_{\mu = 0}^{n}b_\mu \le k\sum_{\mu = 0}^{n}\gamma_\mu
a_\mu + k\sum_{\mu = 0}^{n}c_\mu + B \mbox{ for } n \ge 0.
\]
Suppose that $k\gamma_\mu < 1$ for all $\mu$, and set $\sigma_\mu =
(1 - k\gamma_\mu)^{-1}$. Then
\[
a_n + k\sum_{\mu = 0}^{n}b_\mu \le e^{k\sum_{\mu = 0}^{n}\sigma_\mu
\gamma_\mu}\cdot [k\sum_{\mu = 0}^{n}c_\mu + B].
\]
\end{lemma}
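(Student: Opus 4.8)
The plan is to treat the stated inequality as a discrete analogue of a linear differential inequality and to convert it into a clean scalar recurrence that can be summed explicitly. Throughout I would write $C_n := k\sum_{\mu=0}^n c_\mu + B$, which is nonnegative and nondecreasing in $n$ since $c_\mu, B \ge 0$, and set $P_n := k\sum_{\mu=0}^n \gamma_\mu a_\mu$ with the convention $P_{-1}=0$. With this notation the hypothesis is precisely $a_n + k\sum_{\mu=0}^n b_\mu \le P_n + C_n$, so it suffices to control $P_n$ and then add $C_n$ back at the end.

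First I would isolate the diagonal term. Dropping the nonnegative quantity $k\sum_{\mu=0}^n b_\mu$ and splitting off the $\mu=n$ summand via $P_n = P_{n-1} + k\gamma_n a_n$, the hypothesis becomes $(1-k\gamma_n)a_n \le P_{n-1} + C_n$. Here the assumption $k\gamma_n < 1$ enters in an essential way: it makes $1-k\gamma_n$ strictly positive, so I may divide and obtain $a_n \le \sigma_n(P_{n-1}+C_n)$ with $\sigma_n = (1-k\gamma_n)^{-1}$.

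Next I would manufacture a recurrence for $P_n$ itself. Multiplying the previous bound by $k\gamma_n \ge 0$ and using $P_n - P_{n-1} = k\gamma_n a_n$ yields $P_n \le (1+w_n)P_{n-1} + w_n C_n$, where $w_n := \sigma_n k\gamma_n$. This is a first-order linear recurrence with nonnegative data, and unrolling it (or a one-line induction) gives $P_n \le \sum_{\mu=0}^n w_\mu C_\mu \prod_{\nu=\mu+1}^n(1+w_\nu)$. Invoking monotonicity $C_\mu \le C_n$ together with the telescoping identity $\sum_{\mu=0}^n w_\mu \prod_{\nu=\mu+1}^n(1+w_\nu) = \prod_{\nu=0}^n(1+w_\nu) - 1$ then collapses this to $P_n \le C_n\big(\prod_{\nu=0}^n(1+w_\nu)-1\big)$.

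Finally I would combine and exponentiate: adding $C_n$ gives $a_n + k\sum_{\mu=0}^n b_\mu \le P_n + C_n \le C_n\prod_{\nu=0}^n(1+w_\nu)$, and the elementary inequality $1+x \le e^x$ applied factor by factor bounds the product by $\exp\big(\sum_{\nu=0}^n w_\nu\big) = \exp\big(k\sum_{\nu=0}^n \sigma_\nu\gamma_\nu\big)$, which is exactly the claimed estimate. The only genuinely delicate point is the middle step: one must recognize that the isolated estimate should be multiplied by $k\gamma_n$ so as to produce a \emph{self-contained} telescoping recurrence for $P_n$, rather than attempting a direct induction on $a_n$ (which fails because the accumulated weight $k\sum_\mu \gamma_\mu$ need not be small). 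Everything else is bookkeeping with nonnegative quantities and the two elementary facts $\sigma_n \ge 1$ and $1+x\le e^x$.
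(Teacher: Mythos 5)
Your proof is correct: each step checks out, including the delicate points --- the division by $1-k\gamma_n>0$ to isolate the diagonal term, the recurrence $P_n \le (1+w_n)P_{n-1}+w_nC_n$ with $w_n=\sigma_nk\gamma_n\ge 0$, the telescoping identity $\sum_{\mu=0}^n w_\mu\prod_{\nu=\mu+1}^n(1+w_\nu)=\prod_{\nu=0}^n(1+w_\nu)-1$ (with the empty-product convention), and the final use of $1+x\le e^x$. Note, however, that the paper does not prove this lemma at all; it is the discrete Gronwall lemma, quoted with a citation to Heywood and Rannacher, so there is no in-paper argument to compare against. Your argument is essentially the standard proof from that literature (handle the implicit $\mu=n$ term via $\sigma_n$, convert to a linear recurrence, telescope, and exponentiate), so as a self-contained replacement for the citation it is entirely adequate.
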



\section{Stability and Error Estimates of the First Step Approximation}

The unconditional stability and error estimate of the first step approximation $u_1^h$ are presented in this section. Also using these results, an error estimate of its time derivative $\frac{e_1^{n+1}-e_1^{n}}{k}$ have been proved. 

Therefore, the formulation (\ref{VMSapp}) produces $O(h^m+H^{m}h+k)$ accurate, unconditionally stable approximation to the time-dependent Navier-Stokes equations.

We start by giving stability and error estimate of the modified Stokes Projection,
which we use as the approximation $\tilde u^0$ to the initial velocity $u_0$.

\begin{proposition} [Stability of the Stokes projection] \label{SSP}
Let $u$, $\tilde u$ satisfy (\ref{jan5}). The following bound holds
\begin{eqnarray} \label{jan6}
(\nu + h)\|\nabla \tilde u\|^2 \le 2(\nu + h)\|\nabla u\|^2 \\
\nonumber + 2d(\nu + h)^{-1}\inf_{q^h \in Q^h}\|p - q^h\|^2, \\
\nonumber \mbox{ where } d \mbox{ is the dimension, } d=2,3.
\end{eqnarray}
\end{proposition}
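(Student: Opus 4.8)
The plan is to test the projection equation (\ref{jan5}) against the natural choice $v^h=\tilde u$, which is admissible because $\tilde u\in V^h$. Expanding $(\nu+h)(\nabla(u-\tilde u),\nabla\tilde u)$ isolates the coercive term $(\nu+h)\|\nabla\tilde u\|^2$ and leaves $(\nu+h)(\nabla u,\nabla\tilde u)-(p-q^h,\nabla\cdot\tilde u)$ on the other side, so the whole estimate reduces to bounding these two terms and absorbing the resulting $\|\nabla\tilde u\|^2$ contributions back into the coercive term.

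For the viscous term I would apply Cauchy--Schwarz followed by Young's inequality, splitting $(\nu+h)(\nabla u,\nabla\tilde u)$ into $(\nu+h)\|\nabla u\|^2$ plus $\tfrac{\nu+h}{4}\|\nabla\tilde u\|^2$; the first piece is already in the target form and the second is small enough to absorb. The pressure term is where the only real idea enters. Since $\tilde u\in V^h$, we have $(q^h,\nabla\cdot\tilde u)=0$ for every $q^h\in Q^h$, so $(p-q^h,\nabla\cdot\tilde u)$ is in fact independent of $q^h$; this lets me bound it by $\|p-q^h\|\,\|\nabla\cdot\tilde u\|$ for an arbitrary $q^h$ and then pass to the infimum over $Q^h$. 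Using the elementary pointwise bound $\|\nabla\cdot\tilde u\|\le\sqrt{d}\,\|\nabla\tilde u\|$ and Young's inequality once more produces a term of the form $d(\nu+h)^{-1}\|p-q^h\|^2$ together with a further $\tfrac{\nu+h}{4}\|\nabla\tilde u\|^2$.

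Collecting the two absorbable pieces subtracts $\tfrac{\nu+h}{2}\|\nabla\tilde u\|^2$ from the right-hand side, leaving $\tfrac{\nu+h}{2}\|\nabla\tilde u\|^2$ on the left; multiplying through by $2$ and taking the infimum over $q^h\in Q^h$ yields exactly (\ref{jan6}). I expect no genuine obstacle here: the argument is a standard coercivity/energy estimate, and the only points requiring care are the bookkeeping of the Young's-inequality weights so that the constants $2$ and $2d$ emerge precisely as stated, and the observation that the discrete divergence-free property of $\tilde u$ is what permits inserting $q^h$ and then taking the infimum.
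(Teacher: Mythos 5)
Your proof is correct, and it is the standard energy argument: the paper itself does not spell out a proof of this proposition (it defers to the reference \cite{L08}), and the argument given there is exactly your computation --- test (\ref{jan5}) with $v^h=\tilde u\in V^h$, apply Cauchy--Schwarz and Young's inequality to $(\nu+h)(\nabla u,\nabla\tilde u)$ and to the pressure term via $\|\nabla\cdot\tilde u\|\le\sqrt{d}\,\|\nabla\tilde u\|$, absorb the two $\tfrac{\nu+h}{4}\|\nabla\tilde u\|^2$ pieces, and take the infimum over $q^h$. Your weight bookkeeping reproduces the constants $2$ and $2d(\nu+h)^{-1}$ exactly as stated, so there is nothing to correct.
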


\begin{proposition}\label{ESP}{(Error estimate for Stokes Projection)}. Suppose the discrete inf-sup condition (\ref{inf_sup}) holds. Then the error in Stokes Projection satisfies

\begin{equation}
\begin{split}
(\nu + h)||\nabla(u-\tilde{u})||^2
\leq
C[(\nu + h)\inf_{v^h \in V^h}||\nabla(u-v^h)||^2\\
+(\nu + h)^{-1}\inf_{q^h \in Q^h}||p-q^h||^2],\\
\mbox{where C is a constant independent of $h$ and $\nu$}.
\end{split}
\end{equation}
\end{proposition}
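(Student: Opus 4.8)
The plan is to prove this estimate by a standard C\'ea-type (best-approximation) argument carried out in the reduced formulation (\ref{jan5}), which involves only the velocity space $V^h$. The discrete inf-sup condition (\ref{inf_sup}) is invoked mainly to guarantee that the modified Stokes projection $(\tilde u,\tilde p)$ is well defined and that (\ref{jan1}) and (\ref{jan5}) are equivalent, while the velocity error bound itself rests on coercivity on $V^h$. First I would fix an arbitrary $v^h\in V^h$ and split the error as $u-\tilde u=(u-v^h)-(\tilde u-v^h)=:\eta-\phi^h$, where $\phi^h=\tilde u-v^h\in V^h$. Since $\phi^h$ is an admissible test function in (\ref{jan5}), I would test against $v^h=\phi^h$ and rewrite $(\nu+h)\|\nabla\phi^h\|^2=(\nu+h)(\nabla\eta,\nabla\phi^h)-(\nu+h)(\nabla(u-\tilde u),\nabla\phi^h)$, then use (\ref{jan5}) to replace the last inner product by the pressure term $(p-q^h,\nabla\cdot\phi^h)$ for an arbitrary $q^h\in Q^h$.

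The decisive manipulation is in the pressure term. Because $\phi^h\in V^h$, one has $(\nabla\cdot\phi^h,q^h)=0$ for every $q^h\in Q^h$, so $q^h$ may be inserted freely and chosen later as the best pressure approximation; bounding $\|\nabla\cdot\phi^h\|\le\sqrt d\,\|\nabla\phi^h\|$ and applying Cauchy--Schwarz gives $(p-q^h,\nabla\cdot\phi^h)\le\sqrt d\,\|p-q^h\|\,\|\nabla\phi^h\|$. The point that requires care is the choice of weights in Young's inequality so that the $(\nu+h)$-scalings come out correctly: I would split each cross term into $\tfrac{\nu+h}{4}\|\nabla\phi^h\|^2$ plus a remainder, producing $(\nu+h)\|\nabla\eta\|^2$ from the viscous term and $d(\nu+h)^{-1}\|p-q^h\|^2$ from the pressure term. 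Absorbing the resulting $\tfrac{\nu+h}{2}\|\nabla\phi^h\|^2$ into the left-hand side yields $(\nu+h)\|\nabla\phi^h\|^2\le 2(\nu+h)\|\nabla\eta\|^2+2d(\nu+h)^{-1}\|p-q^h\|^2$.

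Finally I would return to the full error via the triangle inequality $\|\nabla(u-\tilde u)\|^2\le 2\|\nabla\eta\|^2+2\|\nabla\phi^h\|^2$, multiply by $(\nu+h)$, and substitute the bound for $(\nu+h)\|\nabla\phi^h\|^2$ just obtained. Since $v^h\in V^h$ and $q^h\in Q^h$ were arbitrary throughout, taking the infimum over each independently gives $(\nu+h)\|\nabla(u-\tilde u)\|^2\le C\big[(\nu+h)\inf_{v^h\in V^h}\|\nabla(u-v^h)\|^2+(\nu+h)^{-1}\inf_{q^h\in Q^h}\|p-q^h\|^2\big]$ with $C$ depending only on the dimension $d$, which is the claimed estimate. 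The only genuine subtlety, as noted, is tracking the powers of $(\nu+h)$ through Young's inequality so that the viscous and pressure contributions end up with the stated $(\nu+h)^{+1}$ and $(\nu+h)^{-1}$ weights; the remaining steps are routine.
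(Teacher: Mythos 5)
Your proof is correct: the splitting $u-\tilde u=\eta-\phi^h$ with $\phi^h=\tilde u-v^h\in V^h$, testing (\ref{jan5}) with $\phi^h$, the bound $\|\nabla\cdot\phi^h\|\le\sqrt{d}\,\|\nabla\phi^h\|$, and the $(\nu+h)$-weighted Young's inequality all go through and yield the stated estimate with $C$ depending only on the dimension, with the inf-sup condition playing exactly the role you assign it (well-posedness of the projection rather than the velocity bound itself). The paper gives no proof of its own, deferring to \cite{L08}, and the argument there is essentially this same standard C\'ea-type argument, so your proposal matches the intended proof.
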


\begin{proof}
Proofs can be found in \cite{L08}
\end{proof}

\begin{lemma} [Stability of the first step approximation] \label{SAV}
Let $u_1^{h,i}$ satisfy the equation (\ref{VMSapp}). Let $f \in
L^2(0,T;H^{-1}(\Omega))$. Then for $n=0,...,N-1$,

\begin{eqnarray}
\|u_1^{h,n+1}\|^2 + h\|\nabla  u_1^{h,n+1}\|^2 + \nu k\sum_{i=0}^{n+1}\|\nabla
u_1^{h,i}\|^2 \nonumber\\
+hk\sum_{i=0}^{n+1} \big(\|\nabla  u_1^{h,i+1} - \mathbb{G}_1^{\mathbb{H},i}\|^2
+ \|\nabla  u_1^{h,i} - \mathbb{G}_1^{\mathbb{H},i}\|^2\|\big) \nonumber \\
\le \|u^{s,0}\|^2 + h\|\nabla  u^{s,0}\|^2
+ \frac{1}{\nu}k\sum_{i=0}^{n+1}\|f(t_{i})\|_{-1}^2. \nonumber
\end{eqnarray}

\end{lemma}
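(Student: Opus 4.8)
The plan is to use the standard discrete energy method, adapted to the subgrid projection structure. First I would test the momentum equation (\ref{VMSapp}) with $v^h = u_1^{h,n+1}$. Since $u_1^{h,n+1}$ is discretely divergence-free, the pressure term $(p_1^{h,n+1}, \nabla \cdot u_1^{h,n+1})$ vanishes, and since $b^{\ast}$ is skew-symmetric in its last two arguments, $b^{\ast}(u_1^{h,n+1}, u_1^{h,n+1}, u_1^{h,n+1}) = 0$. Applying the polarization identity $(a-b,a) = \tfrac{1}{2}(\|a\|^2 - \|b\|^2 + \|a-b\|^2)$ to the discrete time derivative reduces the equation to the energy identity
\[
\frac{1}{2k}\left(\|u_1^{h,n+1}\|^2 - \|u_1^{h,n}\|^2 + \|u_1^{h,n+1}-u_1^{h,n}\|^2\right) + (\nu+h)\|\nabla u_1^{h,n+1}\|^2 = (f(t_{n+1}),u_1^{h,n+1}) + h(\mathbb{G}_1^{\mathbb{H},n},\nabla u_1^{h,n+1}).
\]

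The crux — and the step I expect to be the main obstacle — is rewriting the two $h$-terms so that the indefinite projection contribution splits into a telescoping piece plus genuinely nonnegative subgrid dissipation terms, in spite of the time lag between $\mathbb{G}_1^{\mathbb{H},n}$ (built from $\nabla u_1^{h,n}$) and $\nabla u_1^{h,n+1}$. Keeping $\nu\|\nabla u_1^{h,n+1}\|^2$ aside, I would move the projection term to the left. Completing the square via $\|\nabla u_1^{h,n+1} - \mathbb{G}_1^{\mathbb{H},n}\|^2 = \|\nabla u_1^{h,n+1}\|^2 - 2(\nabla u_1^{h,n+1},\mathbb{G}_1^{\mathbb{H},n}) + \|\mathbb{G}_1^{\mathbb{H},n}\|^2$ to eliminate the cross term, and then invoking the orthogonality of the $L^2$ projection of Definition \ref{defpro}, namely the Pythagorean identity $\|\mathbb{G}_1^{\mathbb{H},n}\|^2 = \|\nabla u_1^{h,n}\|^2 - \|\nabla u_1^{h,n} - \mathbb{G}_1^{\mathbb{H},n}\|^2$, I would obtain
\[
h\|\nabla u_1^{h,n+1}\|^2 - h(\mathbb{G}_1^{\mathbb{H},n},\nabla u_1^{h,n+1}) = \frac{h}{2}\left(\|\nabla u_1^{h,n+1}\|^2 - \|\nabla u_1^{h,n}\|^2\right) + \frac{h}{2}\|\nabla u_1^{h,n+1}-\mathbb{G}_1^{\mathbb{H},n}\|^2 + \frac{h}{2}\|\nabla u_1^{h,n}-\mathbb{G}_1^{\mathbb{H},n}\|^2.
\]
This is exactly the structure demanded by the statement: a telescoping gradient term together with the two nonnegative subgrid dissipation terms. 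Crucially, no inverse estimate or CFL restriction enters here, which is what ultimately yields an \emph{unconditional} bound.

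It then remains only to close the estimate. I would bound the forcing by the dual norm and Young's inequality, $(f(t_{n+1}),u_1^{h,n+1}) \le \|f(t_{n+1})\|_{-1}\|\nabla u_1^{h,n+1}\| \le \tfrac{1}{2\nu}\|f(t_{n+1})\|_{-1}^2 + \tfrac{\nu}{2}\|\nabla u_1^{h,n+1}\|^2$, and absorb the second term into the retained $\nu\|\nabla u_1^{h,n+1}\|^2$. Multiplying by $2k$, discarding the nonnegative increment $\|u_1^{h,n+1}-u_1^{h,n}\|^2$, and summing the per-step inequality over the time index yields the assertion: the $\|u_1^{h,\cdot}\|^2$ and $h\|\nabla u_1^{h,\cdot}\|^2$ differences telescope, depositing the initial data $\|u^{s,0}\|^2 + h\|\nabla u^{s,0}\|^2$ on the right, while the subgrid dissipation contributions accumulate into the stated $hk$-sum on the left.

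Finally, I would note that because the nonlinearity is eliminated exactly by skew-symmetry and the forcing is controlled directly in the dual norm, the right-hand side of the summed inequality contains no solution-dependent term; hence no Gronwall argument (Lemma \ref{prelim02}) is required, and the bound holds unconditionally in $k$ and $h$.
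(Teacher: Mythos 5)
Your proposal is correct and follows essentially the same route as the paper's proof: testing with $v^h = u_1^{h,n+1}$, eliminating the nonlinearity by skew-symmetry, using the $L^2$-projection orthogonality (Pythagorean identity) to rewrite the lagged subgrid term as a telescoping gradient difference plus the two nonnegative dissipation terms, bounding the forcing in the dual norm with Young's inequality, and summing in time without Gronwall. The only difference is cosmetic: you state the forcing bound correctly with $\frac{\nu}{2}\|\nabla u_1^{h,n+1}\|^2$, whereas the paper's corresponding display contains a typo ($\|u_1^{h,n+1}\|^2$ in place of $\|\nabla u_1^{h,n+1}\|^2$).
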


\begin{proof}
Taking $v^h=u_1^{h,n+1} \in V^h$ in the equation (\ref{VMSapp}), and then applying Cauchy-Schwarz and Young's inequalities give:

\begin{equation}
\begin{split}
\frac{1}{2k}(||u_1^{h,n+1}||^2-||u_1^{h,n}||^2)+(\nu + h)||\nabla u_1^{h,n+1}||^2 - h(\mathbb{G}_1^{\mathbb{H},n},u_1^{h,n+1}) \\
\leq
(f(t_{n+1}),u_1^{h,n+1}). \label{avstab1}
\end{split}
\end{equation}

Also considering the fact that $(\nabla u_1^{h,n} - \mathbb{G}_1^{\mathbb{H},n}, \mathbb{G}_1^{\mathbb{H},n})=0,$ one can easily show
\begin{equation}
\|\nabla u_1^{h,n} - \mathbb{G}_1^{\mathbb{H},n}\|^2=
\|\nabla u_1^{h,n}\|^2-\|\mathbb{G}_1^{\mathbb{H},n}\|^2. \nonumber
\end{equation}
The last equality and some algebraic manipulations give
\begin{eqnarray}
(\nu + h)\|\nabla  u_1^{h,n+1}\|^2 - h(\mathbb{G}_1^{\mathbb{H},n},\nabla u_1^{h,n+1}) \nonumber\\
=
\nu\|\nabla  u_1^{h,n+1}\|^2 + \frac{h}{2}\big(\|\nabla  u_1^{h,n+1} - \mathbb{G}_1^{\mathbb{H},n}\|^2 + 2(\mathbb{G}_1^{\mathbb{H},n},\nabla u_1^{h,n+1}) - \|\mathbb{G}_1^{\mathbb{H},n}\|^2 \big) \nonumber \\
-h (\mathbb{G}_1^{\mathbb{H},n},\nabla u_1^{h,n+1}) +\frac{h}{2}\big(\|\nabla  u_1^{h,n+1}\|^2 - \|\nabla  u_1^{h,n}\|^2\big) +\frac{h}{2} \|\nabla  u_1^{h,n}\|^2 \nonumber \\
=
\nu\|\nabla  u_1^{h,n+1}\|^2 + \frac{h}{2}\|\nabla  u_1^{h,n+1} - \mathbb{G}_1^{\mathbb{H},n}\|^2
+\frac{h}{2} \|\nabla  u_1^{h,n} - \mathbb{G}_1^{\mathbb{H},n}\|^2 \nonumber \\
+\frac{h}{2}\big(\|\nabla  u_1^{h,n+1}\|^2
-\|\nabla  u_1^{h,n}\|^2\big).\label{avstab2}
\end{eqnarray}
\end{proof}

The definition of the dual norm with the regularity assumption on the forcing function followed by Cauchy-Schwarz and Young's inequalities produces

\begin{eqnarray}
(f(t_{n+1}),u_1^{h,n+1}) \leq \frac{1}{2\nu}\|f(t_{n+1})\|_{-1}^2 + \frac{\nu}{2}\|u_1^{h,n+1}\|^2.\label{avstab3}
\end{eqnarray}

Substituting \ref{avstab2} and \ref{avstab3} in \ref{avstab1}, we get

\begin{equation}
\begin{split}
\frac{1}{2k}(||u_1^{h,n+1}||^2-||u_1^{h,n}||^2)+\frac{\nu}{2}\|\nabla  u_1^{h,n+1}\|^2 \\
+ \frac{h}{2}\|\nabla  u_1^{h,n+1} - \mathbb{G}_1^{\mathbb{H},n}\|^2
+\frac{h}{2} \|\nabla  u_1^{h,n} - \mathbb{G}_1^{\mathbb{H},n}\|^2 \nonumber \\
+\frac{h}{2}\big(\|\nabla  u_1^{h,n+1}\|^2
-\|\nabla  u_1^{h,n}\|^2\big)
\leq
\frac{1}{2\nu}\|f(t_{n+1})\|_{-1}^2
\end{split}
\end{equation}

Multiplying both sides by $2k$ and summing over all time levels, the desired result can be found.


\begin{definition}
Let
$$C_u:=||u(x,t)||_{L^\infty(0,T;L^\infty(\Omega))},$$
$$C_{\nabla u}:=||\nabla u(x,t)||_{L^\infty(0,T;L^\infty(\Omega))},$$

and introduce $\tilde{C}$, satisfying
\begin{equation}\label{FEMbound}
\inf_{v \in V^h}||\nabla (u-v)|| \leq C_1\inf_{v \in X^h}||\nabla (u-v)|| \leq C_2h^m||u||_{H^{m+1}} \leq \tilde{C}h^m
\end{equation}

Also, using the constant $C(\Omega)$ from Lemma 2.3, we define
$\bar{C}:=1728C^4(\Omega)$.
\end{definition}

\begin{theorem}[Error estimate of the first step approximation]\label{EAV}
Let $f \in L^2(0,T;H^{-1})$, let $u_1^h$ satisfy (\ref{VMSapp}),
$$k\leq \frac{\nu + h}{18+4C_u^2+2(\nu + h)C_{\nabla u}+2\bar{C}\tilde{C}^4(\nu + h)^{-2}h^{4m}},$$
$$u \in L^2(0,T;H^{m+1}(\Omega)) \cap L^\infty(0,T;L^\infty(\Omega)), \nabla u \in L^\infty(0,T,L^\infty(\Omega)),$$
$$u_t \in L^2(0,T;H^{m+1}(\Omega)), u_{tt} \in L^2(0,T;L^2(\Omega)),p \in L^2(0,T;H^{m}(\Omega)).$$
Then there exist a constant $C=C(\Omega,T,u,p,f,\nu + h)$, such that
$$\max_{1\leq i \leq N}||u(t_i)-u_1^{h,i}||+\Big(  k\sum_{i=1}^{n+1} (\nu + h)||\nabla(u(t_i)-u_1^{h,i})||^2 \Big)^{1/2} \leq C(h^m+H^{m}h+k) $$
\end{theorem}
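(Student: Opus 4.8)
The plan is to derive and close an error equation by comparing the discrete scheme (\ref{VMSapp}) with the weak form of the continuous NSE evaluated at $t_{n+1}$. First I would write the true solution's weak equation at $t_{n+1}$ against $v^h \in V^h$, replacing $u_t(t_{n+1})$ by the backward-Euler difference quotient $\frac{u(t_{n+1}) - u(t_n)}{k}$ and collecting the remainder as a truncation term $\tau^{n+1}$; a Taylor expansion under the hypothesis $u_{tt} \in L^2(0,T;L^2)$ gives $k\sum_n \|\tau^{n+1}\|_{-1}^2 \le Ck^2$. To match the artificial viscosity in (\ref{VMSapp}), I would write $\nu(\nabla u^{n+1}, \nabla v^h) = (\nu+h)(\nabla u^{n+1}, \nabla v^h) - h(\nabla u^{n+1}, \nabla v^h)$, so that subtracting (\ref{VMSapp}) leaves the consistency term $h(\nabla u^{n+1} - \mathbb{G}_1^{\mathbb{H},n}, \nabla v^h)$ on the right. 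Using the splitting $e_1^{i} = \eta_1^i - \phi_1^{h,i}$ from (\ref{errordecomposition}) and testing with $v^h = \phi_1^{h,n+1} \in V^h$, the modified Stokes projection identity (\ref{jan5}) cancels the $(\nu+h)$-weighted diffusion of $\eta_1^{n+1}$ together with the pressure difference (the discrete pressure pairs to zero on $V^h$). The time-derivative term yields $\frac{1}{2k}(\|\phi_1^{h,n+1}\|^2 - \|\phi_1^{h,n}\|^2 + \|\phi_1^{h,n+1} - \phi_1^{h,n}\|^2)$ by polarization, and the diffusion contributes $(\nu+h)\|\nabla\phi_1^{h,n+1}\|^2$ on the left.

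Next I would treat the remaining groups on the right. For the nonlinear difference $b^*(u^{n+1},u^{n+1},\phi_1^{h,n+1}) - b^*(u_1^{h,n+1},u_1^{h,n+1},\phi_1^{h,n+1})$ I would expand with $u_1^{h,n+1} = u^{n+1} - \eta_1^{n+1} + \phi_1^{h,n+1}$ and the bilinear identity $b^*(a,a,\cdot) - b^*(b,b,\cdot) = b^*(a-b,a,\cdot) + b^*(b,a-b,\cdot)$, discarding the pieces that vanish by skew-symmetry ($b^*(\cdot,w,w)=0$). The surviving $\eta$-terms are controlled by (\ref{nonlin_usual}) together with $\|\nabla\eta_1\| \le \tilde C h^m$ from (\ref{FEMbound}); the surviving $\phi$-terms, chiefly $b^*(\phi_1^{h,n+1}, u^{n+1}, \phi_1^{h,n+1})$ and $b^*(\phi_1^{h,n+1}, \eta_1^{n+1}, \phi_1^{h,n+1})$, I would bound with the sharper estimate of Lemma \ref{nonlinear_bound} and the bounds $C_u, C_{\nabla u}$. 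Applying Young's inequality with the conjugate exponents $4/3$ and $4$ (to peel off $\|\nabla\phi_1^{h,n+1}\|^2$ for absorption into the diffusion) is what produces the fourth-power coefficient $\bar C \tilde C^4 (\nu+h)^{-2} h^{4m}$ and the quadratic coefficients $C_u^2$ and $C_{\nabla u}$ that appear in the denominator of the time-step hypothesis.

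For the SAV consistency term I would insert $\mathbb{G}_1^{\mathbb{H},n} = P^H \nabla u_1^{h,n}$ and split $\nabla u^{n+1} - \mathbb{G}_1^{\mathbb{H},n} = (I - P^H)\nabla u^{n+1} + P^H(\nabla u^{n+1} - \nabla u^n) + P^H\nabla e_1^{n}$. Cauchy--Schwarz and Young applied to $h(\nabla u^{n+1} - \mathbb{G}_1^{\mathbb{H},n}, \nabla\phi_1^{h,n+1})$ send part of $(\nu+h)\|\nabla\phi_1^{h,n+1}\|^2$ to the left and leave $\frac{h^2}{\nu+h}$ times three pieces: the coarse-projection bound (\ref{pro2}) gives $\|(I-P^H)\nabla u^{n+1}\| \le CH^{m}$, whence the $H^m h$ rate after summation and the square root; the time-lag piece is $O(k^2)$ via $\|\nabla u^{n+1} - \nabla u^n\| \le \int_{t_n}^{t_{n+1}}\|\nabla u_t\|\,dt$; and the feedback piece $\frac{h^2}{\nu+h}\|\nabla\phi_1^{h,n}\|^2 \le h\|\nabla\phi_1^{h,n}\|^2$ is absorbed, after reindexing, by the accumulated diffusion since $h \le \nu + h$. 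The truncation term is handled by the dual-norm definition. Collecting everything, multiplying by $2k$ and summing over time levels produces an inequality of exactly the form required by the discrete Gronwall Lemma \ref{prelim02}, with $a_n = \|\phi_1^{h,n}\|^2$, $b_\mu \sim (\nu+h)\|\nabla\phi_1^{h,\mu}\|^2$, data $c_\mu \sim h^{2m} + H^{2m}h^2 + k^2$, and $\gamma_\mu$ the nonlinear coefficient; the stated bound on $k$ is precisely what guarantees $k\gamma_\mu < 1$. Applying the lemma and then the triangle inequality $\|e_1^i\| \le \|\eta_1^i\| + \|\phi_1^{h,i}\|$ with the projection estimates finishes the proof.

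The main obstacle is the nonlinear analysis: correctly splitting the trilinear difference, identifying which pieces vanish by skew-symmetry, and choosing the Young exponents so that the residual coefficients assemble into the precise constant $\bar C \tilde C^4(\nu+h)^{-2}h^{4m}$ governing the time-step restriction, all while keeping the absorption of the $\|\nabla\phi_1^{h,n+1}\|^2$ contributions within the available $(\nu+h)$ dissipation. A secondary delicate point is verifying that the $\|\nabla\phi_1^{h,n}\|^2$ feedback from the SAV consistency term is genuinely absorbable under $h \le \nu+h$, so that unconditional stability, with no mesh-dependent restriction beyond the stated $k$-bound, is preserved.
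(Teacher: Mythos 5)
Your overall skeleton --- the error equation tested with $\phi_1^{h,n+1}$, the trilinear splitting (\ref{december1}) with skew-symmetry, the sharper bound of Lemma \ref{nonlinear_bound} with Young exponents $4/3$ and $4$, and a final discrete Gronwall argument --- is the same as the paper's, and your three-way splitting of the SAV consistency term $\nabla u(t_{n+1}) - \mathbb{G}_1^{\mathbb{H},n}$ into a coarse-projection error, a time-lag piece, and a feedback piece $P^H\nabla e_1^n$ is equivalent to the paper's (\ref{er1}). The gap is in how you close the feedback piece. After Cauchy--Schwarz and Young it carries the coefficient $\frac{h^2}{4\epsilon(\nu+h)}\|\nabla\phi_1^{h,n}\|^2$, and you claim this ``is absorbed, after reindexing, by the accumulated diffusion since $h \le \nu+h$.'' That absorption is impossible exactly in the regime the method is built for. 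To hide this term in the dissipation you need $\frac{h^2}{4\epsilon(\nu+h)} \le \theta(1-C\epsilon)(\nu+h)$ with $\theta<1$, i.e. $h^2/(\nu+h)^2 \le 4\epsilon\theta(1-C\epsilon)$, where $C\epsilon$ accounts for the roughly nine other $\epsilon(\nu+h)\|\nabla\phi_1^{h,n+1}\|^2$ pieces that must simultaneously be absorbed in (\ref{al121})--(\ref{december5}). The right-hand side is at most about $1/9$ (optimum near $\epsilon=1/18$), while the left-hand side tends to $1$ as $\nu/h \to 0$; so whenever $\nu \lesssim h$ (the high-Reynolds-number, $h$-dominated-viscosity setting of the paper) the feedback term exhausts the available dissipation no matter how $\epsilon$ is chosen. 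The only escape, taking $\epsilon \sim \nu/(\nu+h)$, makes every Young constant blow up like $\nu^{-1}$ rather than $(\nu+h)^{-1}$, contradicting the claimed dependence $C=C(\Omega,T,u,p,f,\nu+h)$.

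What the paper does instead --- and what your proof needs --- is to apply the inverse inequality (\ref{inverseinequality}) to the feedback: $\|\nabla\phi_1^{h,n}\|^2 \le Ch^{-2}\|\phi_1^{h,n}\|^2$, so the term becomes $\frac{C}{4\epsilon(\nu+h)}\|\phi_1^{h,n}\|^2$, an $L^2$ quantity that is not absorbed into the diffusion but shipped into the discrete Gronwall sum as a $\gamma_\mu a_\mu$ contribution. This is precisely the origin of the bare constant $18$ in the theorem's time-step restriction: with $\epsilon=1/18$ the feedback produces $\frac{9}{\nu+h}\|\phi_1^{h,i}\|^2$-type terms at two adjacent time levels, and $k\gamma_\mu<1$ then forces $k \lesssim (\nu+h)/18$ before the nonlinear contributions $4C_u^2$, $2(\nu+h)C_{\nabla u}$ and $2\bar{C}\tilde{C}^4(\nu+h)^{-2}h^{4m}$ are even counted; your proposal attributes the entire $k$-restriction to the nonlinearity, which is a symptom of the missing step. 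A secondary, smaller issue: you cancel the $\eta$-diffusion and pressure terms by taking $\tilde u^i$ to be the modified Stokes projection at every time level, whereas the paper keeps and bounds those terms and uses the $L^2$ projection for $i\ge 1$; your route requires estimating $\|\frac{\eta_1^{n+1}-\eta_1^n}{k}\|_{-1}$ through the time-differentiated Stokes projection, which needs regularity of $p_t$ that the theorem does not assume.
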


\begin{proof}
By Taylor expansion, $\frac{u(t_{n+1})-u(t_{n})}{k} = u_t(t_{n+1}) -
k\rho ^{n+1}$, where $\rho^{n+1} = u_{tt}(t_{n+\theta })$, for some $\theta \in [0,1]$. The variational formulation of the NSE, followed by the equations (\ref{VMSapp}), gives for $u \in X, p \in Q, u_1, u_2
\in X^h, p_1, p_2 \in Q^h, \forall v \in V^h$
\begin{eqnarray} \label{al116}
(\frac{u(t_{n+1})-u(t_{n})}{k}, v) + (\nu + h)(\nabla u(t_{n+1}), \nabla v) + b^{\ast}(u(t_{n+1}), u(t_{n+1}), v)\\
\nonumber - (p(t_{n+1}),\nabla \cdot v) = (f(t_{n+1}), v) + h(\nabla u(t_{n+1}),\nabla v) - k(\rho^{n+1}, v), \\
\label{al117}
(\frac{u_1^{h,n+1}-u_1^{h,n}}{k}, v) + (\nu + h)(\nabla u_1^{h,n+1}, \nabla v) + b^{\ast}(u_1^{h,n+1}, u_1^{h,n+1}, v)\\
\nonumber - (p_1^{h,n+1},\nabla \cdot v) = (f(t_{n+1}), v) + h(\mathbb{G}_1^{\mathbb{H},n},\nabla v_{h,1}).
\end{eqnarray}
Subtract (\ref{al117}) from (\ref{al116}). Introduce the error in
the AV approximation $e_1^i := u(t_i) - u_1^{h,i}, \forall i$. This
gives
\begin{eqnarray} \label{al119}
(\frac{e_1^{n+1}-e_1^n}{k}, v) + (\nu + h)(\nabla e_1^{n+1},\nabla v)\\
\nonumber + [b^{\ast}(u(t_{n+1}), u(t_{n+1}), v) - b^{\ast}(u_1^{h,n+1}, u_1^{h,n+1}, v)]\\
\nonumber - ((p(t_{n+1}) - p_1^{h,n+1}),\nabla \cdot v) = h(\nabla
u(t_{n+1}) - \mathbb{G}_1^{\mathbb{H},n},\nabla v) - k(\rho^{n+1}, v).
\end{eqnarray}
Adding and subtracting $b^{\ast}(u_1^{h,n+1}, u(t_{n+1}), v)$ to the
nonlinear terms in (\ref{al119}) gives
\begin{eqnarray} \label{december1}
b^{\ast}(u(t_{n+1}), u(t_{n+1}), v) - b^{\ast}(u_1^{h,n+1},
u_1^{h,n+1},
v)\\
\nonumber = b^{\ast}(e_1^{n+1}, u(t_{n+1}), v) +
b^{\ast}(u_1^{h,n+1}, e_1^{n+1}, v).
\end{eqnarray}

Decompose the error
\begin{eqnarray} \label{december15}
e_1^i = u(t_{i}) - u_1^{h,i} = u(t_{i}) - \tilde u^i + \tilde u^i -
u_1^{h,i} =
\eta_1^i - \phi_1^{h,i}, \\
\nonumber \mbox{ where } \tilde u^i \in V^h \mbox{ is some
projection of } u(t_{i}) \mbox{ into } V^h, \\
\nonumber \mbox{ and } \eta_1^i = u(t_{i}) - \tilde u^i, \mbox{
 }\phi_1^{h,i} = u_1^{h,i} - \tilde u^i,
 \phi_1^{h,i}
\in V^h, \forall i.
\end{eqnarray}
Take $v = \phi_1^{h,n+1} \in V^h$ in (\ref{al119}) and use
(\ref{december1}). Using also $b^{\ast}(\cdot, \phi_1^{h,n+1},
\phi_1^{h,n+1}) = 0$ and $V^h \bot Q^h$, we obtain
\begin{eqnarray} \label{al120}
(\frac{\eta_1^{n+1} - \eta_1^n}{k}, \phi_1^{h,n+1}) - (\frac{\phi_1^{h,n+1} - \phi_1^{h,n}}{k}, \phi_1^{h,n+1})\\
\nonumber + (\nu + h)(\nabla \eta_1^{n+1}, \nabla \phi_1^{h,n+1}) - (\nu + h)\|\nabla \phi_1^{h,n+1}\|^2\\
\nonumber + b^{\ast}(\eta_1^{n+1}, u(t_{n+1}), \phi_1^{h,n+1}) - b^{\ast}(\phi_1^{h,n+1}, u(t_{n+1}), \phi_1^{h,n+1})\\
\nonumber + b^{\ast}(u_1^{h,n+1}, \eta_1^{n+1}, \phi_1^{h,n+1}) -
(p(t_{n+1}) - q^{h, n+1}, \nabla \cdot \phi_1^{h,n+1})\\
\nonumber = h(\nabla u(t_{n+1})- \mathbb{G}_1^{\mathbb{H},n}, \nabla \phi_1^{h,n+1}) -
k(\rho^{n+1}, \phi_1^{h,n+1}).
\end{eqnarray}

The equation (\ref{VMSapp}) states that $\mathbb{G}^{\mathbb{H},n}=P^H \nabla u^{h,n}$ where $P^H$ is the $L^2$-orthogonal projection defined by (\ref{pro}). Hence, utilizing Cauchy–Schwarz and Young's inequality, 
\begin{eqnarray}
\lefteqn{h(\mathbb{G}_1^{\mathbb{H},n}-\nabla u(t^{n+1}),\nabla \phi_1^{h,n+1})_{{\Omega_1}}}\nonumber\\
&\leq&(P^H \nabla (u_1^{h,n}-u(t^n)),\nabla \phi_1^{h,n+1})_{{\Omega_1}}-((I-P^H )\nabla u(t^n),\nabla \phi_1^{h,n+1})_{{\Omega_1}}\nonumber\\&&-(\nabla ( u(t^{n+1})-u(t^n)),\nabla \phi_1^{h,n+1})_{{\Omega_1}}\nonumber\\&\leq&\frac{h^2}{4\epsilon (\nu+h)}\Big(\|P^H \nabla\eta_1^n\|^2+\|P^H\nabla\phi_1^{h,n}\|^2\nonumber\\&&+\|(I-P^H )\nabla u(t^n)\|^2+\|\nabla ( u(t^{n+1})-u(t^n))\|^2\Big)\nonumber\\
&&+{\epsilon(\nu+h)\|\nabla \phi_1^{h,n+1}\|^2}. \label{er1}
\end{eqnarray}
Taylor remainder formula is used along with \eqref{pro}, \eqref{pro2} and inverse inequality to get
\begin{eqnarray}
\lefteqn{	h(\mathbb{G}_1^{\mathbb{H},n}-\nabla u(t^{n+1}),\nabla \phi_1^{h,n+1})_{{\Omega_1}}}\nonumber\\&\leq&\frac{h^2}{4\epsilon (\nu+h)}\Big(\|\nabla \eta_1^n\|^2+h^{-2}\|\phi_1^{h,n}\|^2+H^{2m}\|u(t^n)\|_{m+1}^2\nonumber\\&&+k^2\| u_t\|_{L^{\infty}(t^n,t^{n+1};H^1(\Omega))}^2\Big) +{\epsilon(\nu+h)\|\nabla \phi_1^{h,n+1}\|^2}.
\end{eqnarray}

Apply the Cauchy-Schwarz and Young's inequalities to (\ref{al120}).
Since $\|\nabla \cdot \phi_1^{h,n+1}\|^2 \le d\|\nabla
\phi_1^{h,n+1}\|^2$ for $\forall \epsilon
> 0$
\begin{eqnarray} \label{al121}
\frac{\|\phi_1^{h,n+1}\|^2 - \|\phi_1^{h,n}\|^2}{2k} + (\nu + h)\|\nabla \phi_1^{h,n+1}\|^2\\
\nonumber \le 6\epsilon (\nu + h)\|\nabla \phi_1^{h,n+1}\|^2 + \frac{1}{4\epsilon (\nu + h)}\|\frac{\eta_1^{n+1} - \eta_1^n}{k}\|_{-1}^2\\
\nonumber + |b^{\ast}(\eta_1^{n+1}, u(t_{n+1}), \phi_1^{h,n+1})| + |b^{\ast}(\phi_1^{h,n+1}, u(t_{n+1}), \phi_1^{h,n+1})|
\nonumber + |b^{\ast}(u_1^{h,n+1}, \eta_1^{n+1}, \phi_1^{h,n+1})|\\
\nonumber + \frac{h^2}{4\epsilon (\nu+h)}\Big(\|\nabla \eta_1^n\|^2+h^{-2}\|\phi_1^{h,n}\|^2+H^{2m}\|u(t^n)\|_{m+1}^2
+k^2\|u_t\|_{L^{\infty}(t^n,t^{n+1};H^1(\Omega))}^2\Big)\\
\nonumber + \frac{d}{4\epsilon (\nu + h)}\inf_{q^h \in Q^h}\|p(t_{n+1}) - q^{h, n+1}\|^2 + \frac{(\nu + h)}{4\epsilon}\|\nabla \eta_1^{n+1}\|^2
\frac{1}{4\epsilon (\nu + h)}k^2\|\rho^{n+1}\|_{-1}^2.
\end{eqnarray}
We bound the nonlinear terms on the right-hand side of
(\ref{al121}), starting now with the first one. Use the bound
(\ref{nonlin_usual}), the regularity of $u$ and Young's inequality
to obtain
\begin{eqnarray} \label{december2}
|b^{\ast}(\eta_1^{n+1}, u(t_{n+1}), \phi_1^{h,n+1})| \le \epsilon
(\nu + h)\|\nabla \phi_1^{h,n+1}\|^2 \\
\nonumber + C\frac{1}{\nu + h}\|\nabla \eta_1^{n+1}\|^2.
\end{eqnarray}
The second nonlinear term can be bounded, using the definition of
$b^{\ast}(\cdot, \cdot, \cdot)$ and the regularity of $u$. This
gives
\begin{eqnarray} \label{december3}
|b^{\ast}(\phi_1^{h,n+1}, u(t_{n+1}), \phi_1^{h,n+1})| \le
\frac{C_{\nabla u}}{2}\|\phi_1^{h,n+1}\|^2 +
\frac{C_{u}}{2}(|\phi_1^{h,n+1}|, |\nabla \phi_1^{h,n+1}|) \\
\nonumber \le \frac{C_{\nabla u}}{2}\|\phi_1^{h,n+1}\|^2 + \epsilon
(\nu + h)\|\nabla \phi_1^{h,n+1}\|^2 + \frac{C_u^2}{16 \epsilon
(\nu + h)}\|\phi_1^{h,n+1}\|^2.
\end{eqnarray}
For the third nonlinear term of (\ref{al121}), use the error
decomposition to obtain
\begin{eqnarray} \label{december4}
|b^{\ast}(u_1^{h,n+1}, \eta_1^{n+1}, \phi_1^{h,n+1})| \le
|b^{\ast}(u(t_{n+1}), \eta_1^{n+1}, \phi_1^{h,n+1})|\\
\nonumber + |b^{\ast}(\eta_1^{n+1}, \eta_1^{n+1}, \phi_1^{h,n+1})| +
|b^{\ast}(\phi_1^{h,n+1}, \eta_1^{n+1}, \phi_1^{h,n+1})|.
\end{eqnarray}
Use the regularity of $u$ and the inequality (\ref{nonlin_usual}) to
bound the first two terms on the right-hand side of
(\ref{december4}). Applying Lemma \ref{nonlinear_bound} to the third
term gives
\begin{eqnarray} \label{june7}
|b^{\ast}(\phi_1^{h,n+1}, \eta_1^{n+1}, \phi_1^{h,n+1})| \le
C(\om)\|\nabla
\phi_1^{h,n+1}\|^{3/2}\|\phi_1^{h,n+1}\|^{1/2}\|\eta_1^{n+1}\|.
\end{eqnarray}
We apply the Young's inequality to (\ref{june7}) with
$p=\frac{4}{3}$ and $q=4$. Finally it follows from (\ref{december4})
that
\begin{eqnarray} \label{december5}
|b^{\ast}(u_1^{h,n+1}, \eta_1^{n+1}, \phi_1^{h,n+1})| \le \epsilon
(\nu + h)\|\nabla \phi_1^{h,n+1}\|^2\\
\nonumber + \frac{C}{\nu + h}(\|\nabla \eta_1^{n+1}\|^2 + \|\nabla
\eta_1^{n+1}\|^4) \\
\nonumber + \frac{27C^4(\om)}{64\epsilon^3(\nu + h)^3}\|\nabla
\eta_1^{n+1}\|^4\|\phi_1^{h,n+1}\|^2, \\
\nonumber \mbox{  where } C(\om) \mbox{ is the constant from Lemma
\ref{nonlinear_bound} }.
\end{eqnarray}
Take $\epsilon = \frac{1}{18}$ in (\ref{al121}). Using the bounds
(\ref{december2})-(\ref{december5}), we obtain
\begin{eqnarray} \label{december6}
\frac{\|\phi_1^{h,n+1}\|^2 - \|\phi_1^{h,n}\|^2}{2k} + \frac{\nu + h}{2}\|\nabla \phi_1^{h,n+1}\|^2 \\
\nonumber \le \frac{C}{\nu + h}\|\frac{\eta_1^{n+1} - \eta_1^n}{k}\|_{-1}^2
\nonumber + C(\nu + h)\|\nabla \eta_1^{n+1}\|^2
\nonumber + \frac{C}{\nu + h}\inf_{q^h \in Q^h}\|p(t_{n+1}) - q^{h, n+1}\|^2\\
\nonumber + \frac{9h^2}{2(\nu+h)}\Big(\|\nabla \eta_1^n\|^2+h^{-2}\|\phi_1^{h,n}\|^2+H^{2m}\|u(t^n)\|_{m+1}^2
+k^2\|u_t\|_{L^{\infty}(t^n,t^{n+1};H^1(\Omega))}^2\Big)\\
\nonumber + \frac{C}{\nu + h}k^2\|\rho^{n+1}\|_{-1}^2 +
\frac{C}{\nu + h}(\|\nabla \eta_1^{n+1}\|^2 + \|\nabla
\eta_1^{n+1}\|^4)\\
\nonumber + (\frac{1}{2}C_{\nabla u} + \frac{C_u^2}{\nu + h} +
\frac{\bar{C}}{(\nu + h)^3}\|\nabla
\eta_1^{n+1}\|^4)\|\phi_1^{h,n+1}\|^2.
\end{eqnarray}

Sum (\ref{december6}) over all time levels and multiply by $2k$. It
follows from the regularity assumptions of the theorem that
\begin{eqnarray*}
k\sum_{i=0}^{n}\|\rho^{i+1}\|_{-1}^2 \le
Ck\sum_{i=0}^{n}\|\rho^{i+1}\|^2 \le C.
\end{eqnarray*}

Therefore we obtain
\begin{eqnarray} \label{al122}
\|\phi_1^{h,n+1}\|^2 + (\nu + h)k\sum_{i=0}^{n}\|\nabla \phi_1^{h,i+1}\|^2 \le (1+\frac{9k}{2(\nu+h)})\|\phi_1^{h,0}\|^2\\
\nonumber + \frac{2C}{\nu + h}k\sum_{i=0}^{n}[\|\frac{\eta_1^{i+1}-\eta_1^i}{k}\|_{-1}^2 + (\nu + h)^2\|\nabla \eta_1^i\|^2
+ \|\nabla \eta_1^i\|^2 \\
\nonumber + \|\nabla \eta_1^i\|^4 +\inf_{q^h \in Q^h}\|p(t_{i}) - q^{h, i}\|^2 + \|\phi_1^{h,i}\|^2 + h^2H^{2m}+ k^2]\\
+\frac{9}{(\nu + h)}k\sum_{i=0}^{n} \|\phi_1^{h,i+1}\|^2\\
\nonumber + k\sum_{i=0}^{n}(C_{\nabla u} + \frac{2C_u^2}{\nu + h}
+ \frac{2\bar{C}}{(\nu + h)^3}\|\nabla
\eta_1^{i+1}\|^4)\|\phi_1^{h,i+1}\|^2.
\end{eqnarray}

Take $\tilde u^i$ in the error decomposition (\ref{december15}) to
be the $L^2$-projection of $u(t_i)$ into $V^h$, for $i \ge 1$. Take
$\tilde u^0$ to be $u_0^s$. This gives $\phi_1^{h,0} = 0$ and $e_1^0
= \eta_1^0$. Also it follows from Proposition \ref{ESP}
that $\|\nabla \eta_1^0\| \le Ch^m$; under the assumptions of the
theorem the discrete Gronwall's lemma gives
\begin{eqnarray} \label{december7}
\|\phi_1^{h,n+1}\|^2 + (\nu + h)k\sum_{i=0}^{n}\|\nabla \phi_1^{h,i+1}\|^2\\
\nonumber \le \frac{C}{\nu + h}k\sum_{i=0}^{n}[\|\frac{\eta_1^{i+1}-\eta_1^i}{k}\|_{-1}^2 + \|\nabla \eta_1^i\|^2\\
\nonumber + \|\nabla \eta_1^i\|^4 + \inf_{q^h \in Q^h}\|p(t_{i}) -
q^{h, i}\|^2 + h^2H^{2m} + k^2].
\end{eqnarray}
Using the error decomposition and the triangle inequality, we obtain
\begin{eqnarray} \label{june71}
\|e_1^{n+1}\| \le \|\eta_1^{n+1}\| + \|\phi_1^{h,n+1}\|, \\
\nonumber \|e_1^{n+1}\|^2 \le 2\|\eta_1^{n+1}\|^2 +
2\|\phi_1^{h,n+1}\|^2, \\
\nonumber \|\nabla e_1^{i+1}\|^2 \le 2\|\nabla \eta_1^{i+1}\|^2 +
2\|\nabla \phi_1^{h,i+1}\|^2, \\
\nonumber k\sum_{i=0}^{n}(\nu + h)\|\nabla e_1^{i+1}\|^2 \\
\nonumber \le 2k\sum_{i=0}^{n}(\nu + h)\|\nabla \phi_1^{h,i+1}\|^2
+ 2k\sum_{i=0}^{n}(\nu + h)\|\nabla \eta_1^{i+1}\|^2.
\end{eqnarray}
Then it follows from (\ref{december7}),(\ref{june71}) that
\begin{eqnarray} \label{december8}
\|e_1^{n+1}\|^2 + k\sum_{i=0}^{n}(\nu + h)\|\nabla e_1^{i+1}\|^2\\
\nonumber \le \frac{C}{\nu + h}k\sum_{i=0}^{n}[\|\frac{\eta_1^{i+1}-\eta_1^i}{k}\|_{-1}^2 + \|\nabla \eta_1^i\|^2\\
\nonumber + \|\nabla \eta_1^i\|^4 + \inf_{q^h \in Q^h}\|p(t_{i}) -
q^{h, i}\|^2 + h^2H^{2m} + k^2].
\end{eqnarray}
Use the approximation properties of $X^h, Q^h$. Since the mesh nodes
do not depend upon the time level, it follows from
(\ref{interp1}),(\ref{interp2}) that
\begin{eqnarray} \label{june72}
k\sum_{i=0}^{n}\|\frac{\eta_1^{i+1}-\eta_1^i}{k}\|_{-1}^2 \le
Ck\sum_{i=0}^{n}\|\frac{\eta_1^{i+1}-\eta_1^i}{k}\|^2 \le
Ch^{2m}, \\
\nonumber k\sum_{i=0}^{n}\|\nabla \eta_1^i\|^2 \le Ch^{2m}, \\
\nonumber k\sum_{i=0}^{n}\inf_{q^h \in Q^h}\|p(t_{i}) - q^{h, i}\|^2
\le Ch^{2m}.
\end{eqnarray}

Hence, we obtain from (\ref{december8}),(\ref{june72}) that
\begin{eqnarray} \label{december9}
\|u(t_{n+1}) - u_1^{h,n+1}\|^2 + k\sum_{i=0}^{n}(\nu + h)\|\nabla (u(t_{n+1}) - u_1^{h,n+1})\|^2\\
\nonumber \le \frac{C}{\nu + h}[h^{2m} + h^2H^{2m} + k^2], \\
\nonumber \mbox{ where } C=C(\om,T,u,p,f).
\end{eqnarray}
This proves theorem.
\end{proof}


The following lemma will be used in the proof of Theorem (\ref{dedt2}).

\begin{lemma}\label{dedt1}
Let $f \in L^2(0,T;H^{-1}(\Omega))$. Suppose $\phi^{h,0}$ and $\phi^{h,1}$ to be the modified Stokes projections of the initial velocity and velocity at the first time level, respectively. Let $m \geq 2$ and $$k < \frac{4(\nu+h)}{13(4(\nu+h)C_{\nabla u}+3C_u^2)}.$$ 

Then there exist a constant $C=C(\Omega,T,u,p,f,\nu+h)$, such that

\begin{equation}
\begin{split}
||\frac{\phi^{h,1}-\phi^{h,0}}{k}||^2+\frac{13}{2}(\nu+h)k||\nabla \frac{\phi^{h,1}-\phi^{h,0}}{k}||^2
\leq C(kh^{2m}+h^2+k^2+k^2h^{2m-3}+H^{2m})
\end{split}
\end{equation}

\end{lemma}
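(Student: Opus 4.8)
The plan is to turn the first-step error equation into a one-step energy estimate for the backward difference quotient. I would start from (\ref{al119}) at $n=0$, decompose the error as in (\ref{december15}) with $\tilde u^0,\tilde u^1$ taken to be the modified Stokes projections (\ref{jan5}) of $u_0$ and $u(t_1)$, and test with $v=(\phi^{h,1}-\phi^{h,0})/k\in V^h$; write $D_t$ for the difference quotient $(\cdot^{1}-\cdot^{0})/k$. The decisive algebraic move is to treat the viscous term through $\nabla\phi^{h,1}=\nabla\phi^{h,0}+k\nabla D_t\phi^h$, which manufactures the dissipative quantity $(\nu+h)k\|\nabla D_t\phi^h\|^2$, while the discrete-time-derivative term splits as $(\tfrac{e_1^{1}-e_1^{0}}{k},D_t\phi^h)=(D_t\eta_1,D_t\phi^h)-\|D_t\phi^h\|^2$ and manufactures $\|D_t\phi^h\|^2$; these are exactly the two quantities on the left of the statement. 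A further benefit of the modified Stokes projection is that, by the orthogonality (\ref{jan1}), the viscous contribution of $\eta_1^{1}$ cancels the pressure-difference term identically on $V^h$, so neither appears on the right. With the natural initialization the remaining $-(\nu+h)(\nabla\phi^{h,0},\nabla D_t\phi^h)$ either vanishes ($\phi^{h,0}=0$) or is $O(h^m)$ in $H^1$ by Proposition \ref{ESP}, hence harmless.

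Everything else is moved to the right and bounded with Cauchy--Schwarz and Young. The term $(D_t\eta_1,D_t\phi^h)$ is handled through the $L^2$ size of the difference quotient of the Stokes-projection error (the mesh being time independent), producing an $h^{2m}$-type contribution; the truncation term $k(\rho^{1},D_t\phi^h)$ with $\rho^{1}=u_{tt}(t_\theta)$ produces the $k^2$ contribution. The coarse-scale term $h(\nabla u(t_1)-\mathbb{G}_1^{\mathbb{H},0},\nabla D_t\phi^h)$ is split exactly as in (\ref{er1}) into a temporal increment $\nabla(u(t_1)-u(t_0))$, a coarse $L^2$-projection error $(I-P^H)\nabla u(t_0)$, and a term in $P^H\nabla e_1^{0}$, and then estimated with the Taylor remainder and the projection bounds (\ref{pro}), (\ref{pro2}); the coarse projection is the source of the $H^{2m}$ term. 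A single application of the inverse inequality (\ref{inverseinequality}), needed to pass from $L^2$ to $H^1$ control at the cost of $h^{-1}$, is what produces the suboptimal $k^2h^{2m-3}$ term. Each of these steps is arranged so as to spend only a controllable multiple of $(\nu+h)\|\nabla D_t\phi^h\|^2$.

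The main obstacle is the nonlinear terms, since the test slot now carries the very gradient $\|\nabla D_t\phi^h\|$ that must stay coercive. After the splitting (\ref{december1}) and the further decomposition of $b^{\ast}(u_1^{h,1},e_1^{1},D_t\phi^h)$ as in (\ref{december4}), I would isolate the $(\nu+h)\|\nabla D_t\phi^h\|^2$ pieces using the standard bound (\ref{nonlin_usual}) and the sharper Lemma \ref{nonlinear_bound}, and feed the already-proven first-step results --- the error bounds of Theorem \ref{EAV} for $\|e_1^{1}\|$ and $\|\nabla e_1^{1}\|$ and the stability bounds of Lemma \ref{SAV} for $u_1^{h,1}$ --- into the remaining factors. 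The term $b^{\ast}(\phi^{h,1},u(t_1),D_t\phi^h)$ is the one that generates, via the estimate in (\ref{december3}), a multiple of $\|D_t\phi^h\|^2$ on the right with the $C_{\nabla u}$ and $C_u^2$ dependence. Because this is a single step no discrete Gronwall argument is required; the hypothesis $k<\tfrac{4(\nu+h)}{13(4(\nu+h)C_{\nabla u}+3C_u^2)}$ is precisely what forces the coefficient of this $\|D_t\phi^h\|^2$ to be dominated by the $\|D_t\phi^h\|^2$ already on the left, allowing absorption, and collecting the surviving dissipation contributions then fixes the numerical coefficient $\tfrac{13}{2}$ and yields the stated right-hand side.
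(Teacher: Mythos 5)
Your proposal follows essentially the same route as the paper's proof: test the $n=0$ error equation with $(\phi^{h,1}-\phi^{h,0})/k$, extract the dissipation via $\nabla\phi^{h,1}=\nabla\phi^{h,0}+k\nabla\frac{\phi^{h,1}-\phi^{h,0}}{k}$, use the modified Stokes projection orthogonality to neutralize the viscous/pressure contributions, split the SAV term into a temporal increment plus the coarse projection error (the $H^{2m}$ source), bound the nonlinear terms with (\ref{nonlin_usual}), Lemma \ref{nonlinear_bound} and the inverse inequality (the source of the suboptimal $k^2h^{2m-3}$ term), and absorb the $\|\frac{\phi^{h,1}-\phi^{h,0}}{k}\|^2$ contributions through the stated time-step restriction with no Gronwall argument --- exactly the paper's structure, including the choice of Young parameters that produces the $\tfrac{13}{2}$. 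The only cosmetic difference is that you invoke the Stokes orthogonality at $t_1$ to cancel the $\eta^1$-viscous and pressure terms and treat $(\nu+h)(\nabla\phi^{h,0},\nabla\frac{\phi^{h,1}-\phi^{h,0}}{k})$ separately, whereas the paper subtracts the Stokes-projection identity at $t_0$ (its (\ref{eq35})) and carries difference quotients of $\eta$ and $p$ to the right-hand side; the two bookkeeping choices are equivalent.
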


\begin{proof}
From the Stokes Projection(\ref{Modified Stokes Projection}) and error decomposition(\ref{errordecomposition}), we have

\begin{equation}\label{eq35}
(\nu+h)(\nabla \phi^{h,0},\nabla v)- (\nu+h)(\nabla \eta^0,\nabla v)-(p^0-q,\nabla.v)=0
\end{equation}

On the other hand the solution at the first time level satisfies the following

\begin{equation}\label{eq36}
\begin{split}
||\frac{\phi^{h,1}-\phi^{h,0}}{k}||^2+(\nu+h)(\nabla \phi^{h,1},\nabla \frac{\phi^{h,1}-\phi^{h,0}}{k})+b^*(u(t_1),u(t_1),\frac{\phi^{h,1}-\phi^{h,0}}{k})\\
-b^*(u_1^{h,1},u_1^{h,1},\frac{\phi^{h,1}-\phi^{h,0}}{k}) + (p^1,\nabla.\frac{\phi^{h,1}-\phi^{h,0}}{k}) \\=h(\nabla u(t_1) - \mathbb{G}_1^{\mathbb{H},0},\nabla \frac{\phi^{h,1}-\phi^{h,0}}{k})+k(\rho^1,\frac{\phi^{h,1}-\phi^{h,0}}{k})\\
+(\frac{\eta^1-\eta^0}{k},\frac{\phi^{h,1}-\phi^{h,0}}{k})+(\nu+h)(\nabla \eta^1,\frac{\phi^{h,1}-\phi^{h,0}}{k}),\\
\mbox{where } k\rho^1=\frac{u(t_1)-u(t_0)}{k}-u_t^{1}=ku_{tt}^\theta \mbox{, for some } \theta \in  (0,k).
\end{split}
\end{equation}

Subtracting equation \ref{eq35} from equation \ref{eq36} for $v=\frac{\phi^{h,1}-\phi^{h,0}}{k}$, we have

\begin{equation}\label{eq37}
\begin{split}
||\frac{\phi^{h,1}-\phi^{h,0}}{k}||^2+k(\nu+h)||\nabla \frac{\phi^{h,1}-\phi^{h,0}}{k}||^2\\
+b^*(u(t_1),u(t_1),\frac{\phi^{h,1}-\phi^{h,0}}{k})-b^*(u_1^{h,1},u_1^{h,1},\frac{\phi^{h,1}-\phi^{h,0}}{k})
\\ -k(\frac{p^1-p^0}{k}-q,\nabla.\frac{\phi^{h,1}-\phi^{h,0}}{k})
\\=h(\nabla u(t_1) - \mathbb{G}_1^{\mathbb{H},0},\nabla \frac{\phi^{h,1}-\phi^{h,0}}{k})+(\rho^1,\frac{\phi^{h,1}-\phi^{h,0}}{k})+(\frac{\eta^1-\eta^0}{k},\frac{\phi^{h,1}-\phi^{h,0}}{k})\\
+k(\nu+h)(\nabla \frac{\eta^1-\eta^0}{k},\nabla \frac{\phi^{h,1}-\phi^{h,0}}{k})
\end{split}
\end{equation}

Adding and subtracting $b^*(u_1^{h,1},u(t_1),\frac{\phi^{h,1}-\phi^{h,0}}{k})$ to the nonlinear terms in equation (\ref{eq37})
together with error decomposition (\ref{errordecomposition}) gives

\begin{eqnarray}\label{eq38}
\lefteqn{b^*(u(t_1),u(t_1),\frac{\phi^{h,1}-\phi^{h,0}}{k})-b^*(u_1^{h,1},u_1^{h,1},\frac{\phi^{h,1}-\phi^{h,0}}{k})} \nonumber\\
&=&b^*(e_1^1,u(t_1),\frac{\phi^{h,1}-\phi^{h,0}}{k})+b^*(u_1^{h,1},e_1^1,\frac{\phi^{h,1}-\phi^{h,0}}{k}) \nonumber\\
&=& b^*(\phi^{h,1},u(t_1),\frac{\phi^{h,1}-\phi^{h,0}}{k})-b^*(\eta^1,u(t_1),\frac{\phi^{h,1}-\phi^{h,0}}{k}) \nonumber\\
&+&b^*(u_1^{h,1},\phi^{h,1},\frac{\phi^{h,1}-\phi^{h,0}}{k})-b^*(u_1^{h,1},\eta^1,\frac{\phi^{h,1}-\phi^{h,0}}{k})
\end{eqnarray}

Adding and subtracting $\phi^{h,0}$ to the first component of the first nonlinear term in the equation (\ref{eq38}) gives

\begin{equation}\label{eq39}
b^*(\phi^{h,1},u(t_1),\frac{\phi^{h,1}-\phi^{h,0}}{k})=kb^*(\frac{\phi^{h,1}-\phi^{h,0}}{k},u(t_1),\frac{\phi^{h,1}-\phi^{h,0}}{k})+b^*(\phi^{h,0},u(t_1),\frac{\phi^{h,1}-\phi^{h,0}}{k})
\end{equation}

In the first nonlinear term of (\ref{eq39}), applying Cauchy-Schwarz and Young's inequalities together with the regularity assumption of u and bound (\ref{nonlin_usual}) gives

\begin{equation}\label{in310}
\begin{split}
k|b^*(\frac{\phi^{h,1}-\phi^{h,0}}{k},u(t_1),\frac{\phi^{h,1}-\phi^{h,0}}{k})| \leq kC_{\nabla u}||\frac{\phi^{h,1}-\phi^{h,0}}{k}||^2\\+k\mu^*(\nu+h)||\nabla \frac{\phi^{h,1}-\phi^{h,0}}{k}||^2+k\frac{C_u^2}{16(\nu+h)\mu^*}||\frac{\phi^{h,1}-\phi^{h,0}}{k}||^2
\end{split}
\end{equation}

In the second nonlinear term of (\ref{eq39}), applying Cauchy Schwarz and Young's inequalities together with bound (\ref{nonlin_usual}) and inverse inequality (\ref{inverseinequality}) gives

\begin{equation}\label{in311}
|b^*(\phi^{h,0},u(t_1),\frac{\phi^{h,1}-\phi^{h,0}}{k})| \leq \mu||\frac{\phi^{h,1}-\phi^{h,0}}{k}||^2+\frac{Ch^{-2}}{4\mu}||\nabla \phi^{h,0}||^2
\end{equation}

In the second nonlinear term of (\ref{eq38}), applying Cauchy Schwarz and Young's inequalities together with bound (\ref{nonlin_usual}) and inverse inequality (\ref{inverseinequality}) gives

\begin{equation}\label{in312}
|b^*(\eta^1,u(t_1),\frac{\phi^{h,1}-\phi^{h,0}}{k})|\leq \mu||\frac{\phi^{h,1}-\phi^{h,0}}{k}||^2+\frac{Ch^{-2}}{4\mu}||\nabla \eta^1||^2
\end{equation}

For the third nonlinear term of equation (\ref{eq38}), applying error decomposition (\ref{errordecomposition}) gives

\begin{equation}\label{in313}
\begin{split}
|b^*(u_1^{h,1},\phi^{h,1},\frac{\phi^{h,1}-\phi^{h,0}}{k})|\leq |b^*(u(t_1),\phi^{h,1},\frac{\phi^{h,1}-\phi^{h,0}}{k})|+|b^*(\phi^{h,1},\phi^{h,1},\frac{\phi^{h,1}-\phi^{h,0}}{k})|\\
+|b^*(\eta^1,\phi^{h,1},\frac{\phi^{h,1}-\phi^{h,0}}{k})|
\end{split}
\end{equation}

Since nonlinear form is skew-symmetric in the second and third entry, we can replace terms like the first nonlinear term in the inequality (\ref{in313}) with terms like $|b^*(u(t_1),\phi^{h,0},\frac{\phi^{h,1}-\phi^{h,0}}{k})|$. Applying Cauchy-Schwarz and Young's inequalities together with the regularity assumption of u and inverse inequality gives

\begin{equation}\label{in314}
|b^*(u(t_1),\phi^{h,0},\frac{\phi^{h,1}-\phi^{h,0}}{k})| \leq 2\mu||\frac{\phi^{h,1}-\phi^{h,0}}{k}||^2+\frac{C_u^2}{4\mu}(||\nabla \phi^{h,0}||^2+h^{-2}||\phi^{h,0}||^2)
\end{equation}

Applying Young's inequality together with the Lemma (\ref{nonlinear_bound}) and inverse inequality (\ref{inverseinequality}) in the second nonlinear term of (\ref{in313}) gives

\begin{equation}\label{in315}
|b^*(\phi^{h,1},\phi^{h,1},\frac{\phi^{h,1}-\phi^{h,0}}{k})|=|b^*(\phi^{h,1},\phi^{h,0},\frac{\phi^{h,1}-\phi^{h,0}}{k})|
 \leq \mu||\frac{\phi^{h,1}-\phi^{h,0}}{k}||^2+\frac{Ch^{-3}}{4\mu}||\phi^{h,1}||^2||\nabla \phi^{h,0}||^2
\end{equation}

For the last nonlinear term in the inequality (\ref{in313}), we can apply (\ref{nonlin_usual}) and inverse inequality followed by Young's inequality to have

\begin{equation}\label{in316}
\begin{split}
|b^*(\eta^1,\phi^{h,1},\frac{\phi^{h,1}-\phi^{h,0}}{k})|=|b^*(\eta^1,\phi^{h,0},\frac{\phi^{h,1}-\phi^{h,0}}{k})|\\
\leq \mu||\frac{\phi^{h,1}-\phi^{h,0}}{k}||^2+\frac{Ch^{-2}}{4\mu}||\nabla \eta^1||^2||\nabla \phi^{h,0}||^2
\end{split}
\end{equation}

For the forth nonlinear term of equation (\ref{eq38}), applying error decomposition gives

\begin{equation}\label{in317}
\begin{split}
|b^*(u_1^{h,1},\eta^1,\frac{\phi^{h,1}-\phi^{h,0}}{k})|\leq |b^*(u(t_1),\eta^1,\frac{\phi^{h,1}-\phi^{h,0}}{k})|+|b^*(\phi^{h,1},\eta^1,\frac{\phi^{h,1}-\phi^{h,0}}{k})|\\
+|b^*(\eta^1,\eta^1,\frac{\phi^{h,1}-\phi^{h,0}}{k})|
\end{split}
\end{equation}

For all the nonlinear terms in the inequality (\ref{in317}), we can apply bound (\ref{nonlin_usual}) and inverse inequality followed by Young's inequality to have

\begin{equation}\label{in318}
|b^*(u(t_1),\eta^1,\frac{\phi^{h,1}-\phi^{h,0}}{k})| \leq \mu ||\frac{\phi^{h,1}-\phi^{h,0}}{k}||^2+\frac{Ch^{-2}}{4\mu}||\nabla \eta^1||^2
\end{equation}

\begin{equation}\label{in319}
|b^*(\phi^{h,1},\eta^1,\frac{\phi^{h,1}-\phi^{h,0}}{k})| \leq \mu||\frac{\phi^{h,1}-\phi^{h,0}}{k}||^2+Ch^{-4}||\nabla \eta^1||^2||\phi^{h,1}||^2
\end{equation}

\begin{equation}\label{in320}
\begin{split}
|b^*(\eta^1,\eta^1,\frac{\phi^{h,1}-\phi^{h,0}}{k})| \leq \mu||\frac{\phi^{h,1}-\phi^{h,0}}{k}||^2+Ch^{-2}||\nabla \eta^1||^4
\end{split}
\end{equation}

The equation (\ref{VMSapp}) states that $\mathbb{G}^{\mathbb{H},0}=P^H \nabla u(t_0)$ is the $L^2$-orthogonal projection of the initial value. Hence, utilizing Cauchy–Schwarz and Young's inequality, 
\begin{eqnarray}
\lefteqn{h(\nabla u(t_1)-\mathbb{G}_1^{\mathbb{H},0},\nabla \frac{\phi^{h,1}-\phi^{h,0}}{k})}\nonumber\\
&=& hk(\nabla \frac{u(t_1)-u(t_0)}{k},\nabla \frac{\phi^{h,1}-\phi^{h,0}}{k}) + h(\nabla u(t_0) - \mathbb{G}_1^{\mathbb{H},0},\nabla \frac{\phi^{h,1}-\phi^{h,0}}{k})\nonumber\\
&\leq& k\mu^*(\nu+h)||\nabla \frac{\phi^{h,1}-\phi^{h,0}}{k}||^2 + \frac{kh^2}{4\mu^*(\nu+h)}||\nabla \frac{u(t_1)-u(t_0)}{k}||^2\\
&+& C||(I-P^H) \nabla u(t_0)||^2 + \mu h^2||\nabla \frac{\phi^{h,1}-\phi^{h,0}}{k}||^2.\nonumber
\end{eqnarray}
Taylor remainder formula is used along with \eqref{pro}, \eqref{pro2} and inverse inequality to get
\begin{eqnarray}
\lefteqn{h(\nabla u(t_1)-\mathbb{G}_1^{\mathbb{H},0},\nabla \frac{\phi^{h,1}-\phi^{h,0}}{k})}\nonumber\\
&\leq& k\mu^*(\nu+h)||\nabla \frac{\phi^{h,1}-\phi^{h,0}}{k}||^2 + \frac{kh^2}{4\mu^*(\nu+h)}\| u_t\|_{L^{\infty}(t^0,t^1;H^1(\Omega))}^2\\
&+& CH^{2m}\|u(t_0)\|_{m+1}^2 + \mu ||\frac{\phi^{h,1}-\phi^{h,0}}{k}||^2.\nonumber
\end{eqnarray}

Apply Cauchy-Schwarz and Young's inequalities to (\ref{eq37}). Since $||\nabla.\frac{\phi^{h,1}-\phi^{h,0}}{k}|| \leq d||\nabla\frac{\phi^{h,1}-\phi^{h,0}}{k}||$,

\begin{equation}\label{in321}
\begin{split}
(1-12\mu-(\frac{C_{\nabla u}}{2}+\frac{C_u^2}{16(\nu+h)\mu^*})k)||\frac{\phi^{h,1}-\phi^{h,0}}{k}||^2\\
+(1-4\mu^*)(\nu+h)k||\nabla \frac{\phi^{h,1}-\phi^{h,0}}{k}||^2\\
\leq \frac{dk}{4\mu^*(\nu+h)}\inf_{q\in Q^h}||\frac{p^1-p^0}{k}-q||^2+\frac{k^2}{4\mu}||\rho^1||^2+\frac{1}{4\mu}||\frac{\eta^1-\eta^0}{k}||^2\\
+\frac{k(\nu+h)}{4\mu^*}||\nabla \frac{\eta^1-\eta^0}{k}||^2+\frac{Ch^{-2}}{4\mu}||\nabla \phi^{h,0}||^2+\frac{C_u^2}{4\mu}||\nabla \phi^{h,0}||^2+\frac{C_u^2h^{-2}}{4\mu}||\phi^{h,0}||^2\\
+\frac{Ch^{-3}}{4\mu}||\phi^{h,1}||^2||\nabla \phi^{h,0}||^2+\frac{Ch^{-2}}{4\mu}||\nabla \eta^1||^2||\nabla \phi^{h,0}||^2\\
+\frac{Ch^{-2}}{2\mu}||\nabla \eta^1||^2+Ch^{-4}||\phi^{h,1}||^2||\nabla \eta^1||^2+Ch^{-2}||\nabla \eta^1||^4\\
+ \frac{kh^2}{4\mu^*(\nu+h)}\| u_t\|_{L^{\infty}(t^0,t^1;H^1(\Omega))}^2
+ CH^{2m}\|u(t_0)\|_{m+1}^2.
\end{split}
\end{equation}

Use the approximation properties of $X^h, Q^h.$ Since the mesh nodes do not depend upon the time level, it follows from (\ref{interp1}), (\ref{interp2}) that

\begin{equation}\label{inek}
\begin{split}
\inf_{q \in Q}||\frac{p^{1}+p^{0}}{k}-q||^2 \leq Ch^{2m},\\
||\frac{\eta_2^{1}-\eta_2^{0}}{k}||^2
\leq
Ch^{2m+2},\\
||\eta_2^{1}||^2
\leq
Ch^{2m+2}.
\end{split}
\end{equation}

Taking $\mu=1/13$ and $\mu^*=1/8$ and using bounds (\ref{inek}) for each term, it follows from the regularity assumption of u that

\begin{equation}\label{in322}
\begin{split}
(\frac{1}{13}-(\frac{C_{\nabla u}}{2}+\frac{3C_u^2}{8(\nu+h)})k)||\frac{\phi^{h,1}-\phi^{h,0}}{k}||^2+\frac{1}{2}(\nu+h)k||\nabla \frac{\phi^{h,1}-\phi^{h,0}}{k}||^2\\
\leq C(h^{2m-2}+h^2+k^2+k^2h^{2m-3} + H^{2m})
\end{split}
\end{equation}
The last inequality implies the lemma statement.
\end{proof}

\begin{theorem}\label{dedt2}
Let the assumptions of Lemma (\ref{dedt1}) and Theorem (\ref{EAV}) be satisfied.

Let $k \leq \min \{
\frac{\nu+h}{2CC_{\nabla u}(\nu+h)+2CC_u^2},C(\nu+h)^{\frac{5}{3}},C(\nu+h)^3 \}$

Then
$$||\frac{e_1^{n+1}-e_1^n}{k}||^2 + k\sum_{i=0}^n(\nu+h)||\nabla \frac{e_1^{i+1}-e_1^i}{k}||^2 \leq C[h^{2m}+h^2+k^2]$$
\end{theorem}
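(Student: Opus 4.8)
The plan is to imitate the energy argument of Theorem \ref{EAV}, but applied to the first difference quotient of the error rather than to the error itself. Introduce the shorthand $D_t w^{n+1}:=\frac{w^{n+1}-w^n}{k}$, so that the quantity to be estimated is $\|D_t e_1^{n+1}\|^2+k\sum_i(\nu+h)\|\nabla D_t e_1^{i+1}\|^2$. Writing the error equation (\ref{al119}) at the two consecutive levels $n+1$ and $n$, subtracting, and dividing by $k$ produces an identity for $D_t e_1^{n+1}$ in which the time-derivative term becomes the second difference quotient $D_t^2 e_1^{n+1}$, the viscous term becomes $(\nu+h)(\nabla D_t e_1^{n+1},\nabla v)$, and the right-hand side carries the subgrid term $h(\nabla D_t u(t_{n+1})-D_t\mathbb{G}_1^{\mathbb{H},n},\nabla v)$ together with a differenced temporal-consistency term. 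Because the level-$n$ instance of (\ref{al119}) is only available for $n\ge 1$, this differenced identity holds for $n\ge 1$; the base level, i.e.\ the contribution of $D_t e_1^{1}=\frac{e_1^1-e_1^0}{k}$, is exactly what Lemma \ref{dedt1} controls and will serve as the initial datum of a discrete Gronwall argument.

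Next I would split $D_t e_1^{n+1}=D_t\eta_1^{n+1}-\psi^{n+1}$, with $\psi^{n+1}:=D_t\phi_1^{h,n+1}$, and test with $v=\psi^{n+1}\in V^h$. After the rearrangement used to pass from (\ref{al120}) to (\ref{al121}), the second difference quotient contributes the telescoping term $\frac{1}{2k}(\|\psi^{n+1}\|^2-\|\psi^{n}\|^2)$ on the left together with a dual-norm consistency term in the differenced projection error $\eta_1$, and the viscous term contributes $(\nu+h)\|\nabla\psi^{n+1}\|^2$ on the left plus a cross term $(\nu+h)(\nabla D_t\eta_1^{n+1},\nabla\psi^{n+1})$. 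Since $\psi^{n+1}\in V^h$ and $V^h\perp Q^h$, the pressure contribution reduces to a best-approximation term $\inf_{q^h}\|D_t(p-q^h)^{n+1}\|$. The subgrid right-hand side is treated exactly as in Lemma \ref{dedt1}: using $D_t\mathbb{G}_1^{\mathbb{H},n}=P^H\nabla D_t u_1^{h,n}$, adding and subtracting $\nabla D_t u(t_n)$, and invoking (\ref{pro2}) together with the inverse inequality (\ref{inverseinequality}) produces controllable $\|\nabla D_t\eta_1^{n}\|^2$, $h^{-2}\|\psi^n\|^2$, $H^{2m}$, and $O(k^2)$ pieces, after absorbing an $\epsilon(\nu+h)\|\nabla\psi^{n+1}\|^2$ into the left side. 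The differenced temporal-consistency term I would bound through the integral form of the Taylor remainder so that, once summed, it contributes at order $k$ in the relevant norm.

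The heart of the proof, and the main obstacle, is the differenced nonlinear term $k^{-1}\big[(b^*(u(t_{n+1}),u(t_{n+1}),v)-b^*(u_1^{h,n+1},u_1^{h,n+1},v))-(\text{same at level }n)\big]$. Starting from the linearization (\ref{december1}) at each level and applying a discrete product rule, this splits into principal pieces carrying $D_t e_1^{n+1}$ and data pieces carrying the undifferentiated error $e_1^{n}$ multiplied by $D_t u$ or $D_t u_1^{h,n+1}$. Writing $D_t e_1^{n+1}=D_t\eta_1^{n+1}-\psi^{n+1}$ in the principal pieces, skew-symmetry eliminates the dangerous diagonal term $b^*(u_1^{h,n},\psi^{n+1},\psi^{n+1})=0$, while the surviving term $b^*(\psi^{n+1},u(t_{n+1}),\psi^{n+1})$ is controlled, as in (\ref{december3}), by $\frac{C_{\nabla u}}{2}\|\psi^{n+1}\|^2+\epsilon(\nu+h)\|\nabla\psi^{n+1}\|^2+\frac{C_u^2}{16\epsilon(\nu+h)}\|\psi^{n+1}\|^2$; the remaining principal pieces $b^*(D_t\eta_1^{n+1},u(t_{n+1}),\psi^{n+1})$ and $b^*(u_1^{h,n},D_t\eta_1^{n+1},\psi^{n+1})$ are bounded using the regularity of $u$, the stability bound on $\|\nabla u_1^{h,n}\|$ from Lemma \ref{SAV}, and (\ref{nonlin_usual}). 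The genuinely new difficulty is the data pieces involving $D_t u_1^{h,n+1}$: since $D_t u_1^{h,n+1}=D_t u(t_{n+1})-D_t e_1^{n+1}$, one must express the discrete time derivative of the discrete velocity through the quantity being estimated and then use the inverse inequality (\ref{inverseinequality}) together with the already-established $L^\infty(L^2)$ and $L^2(H^1)$ bounds of Theorem \ref{EAV} (and the sharp bound of Lemma \ref{nonlinear_bound} for the quartic contributions) so that all such terms are either absorbed into $\epsilon(\nu+h)\|\nabla\psi^{n+1}\|^2$ or placed under the Gronwall sum as $\gamma_i\|\psi^{i+1}\|^2$. Keeping track of the powers of $h^{-1}$ that the inverse inequality introduces is precisely what dictates the three-part time-step restriction on $k$.

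Finally, I would multiply the assembled inequality by $2k$, sum over $n$ from $1$ up to the target index, and invoke the discrete Gronwall Lemma \ref{prelim02}; the hypothesis $k\le\min\{\cdots\}$ guarantees both $k\gamma_\mu<1$ and the absorption of the $\|\psi^{n+1}\|^2$ coefficients, while Lemma \ref{dedt1} supplies the initial datum $\|\psi^1\|^2+k(\nu+h)\|\nabla\psi^1\|^2$. Using the approximation estimates (\ref{interp1})--(\ref{interp2}) and (\ref{FEMbound}) to bound the accumulated $D_t\eta_1$, $\nabla\eta_1$ and pressure terms by $h^{2m}$, noting that the subgrid and initial-data contributions enter exactly as in Theorem \ref{EAV} and are dominated by $h^2$ (so that $h^2H^{2m}\le h^2$ for $H\le 1$), and converting back from $\psi^{n+1}=D_t\phi_1^{h,n+1}$ to $D_t e_1^{n+1}=D_t\eta_1^{n+1}-\psi^{n+1}$ via the triangle inequality yields the claimed bound $C[h^{2m}+h^2+k^2]$.
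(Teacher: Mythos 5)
Your proposal follows essentially the same route as the paper's proof: difference the error equation (\ref{al119}) at consecutive time levels, test with $s^{h,n+1}=\frac{\phi_1^{h,n+1}-\phi_1^{h,n}}{k}$, treat the differenced nonlinearity by the discrete product rule with skew-symmetry killing the diagonal term and Lemma \ref{nonlinear_bound} (Young with $p=4/3$, $q=4$) producing the quartic Gronwall coefficient, control the differenced subgrid term via the $L^2$-projection bounds and the inverse inequality, take the initial datum $\|s^{h,1}\|$ from Lemma \ref{dedt1}, and verify the Gronwall smallness condition by the $k\le h$ versus $h\le k$ case analysis using Theorem \ref{EAV}, which is exactly where the paper's three-part restriction on $k$ comes from. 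The only differences are bookkeeping ones (you divide the differenced equation by $k$ and group the product-rule cross terms slightly differently), so the plan is sound and matches the paper.
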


\begin{proof}

Start with the proof of the bound for $\|\frac{\phi_1^{h,n+1} -
\phi_1^{h,n}}{k}\|$. Consider (\ref{al119}) with (\ref{december1})
for $n \ge 1$
\begin{eqnarray} \label{december17}
(\frac{e_1^{n+1}-e_1^n}{k}, v) + (\nu+h)(\nabla e_1^{n+1},\nabla v)\\
\nonumber + b^{\ast}(e_1^{n+1}, u(t_{n+1}), v) +
b^{\ast}(u_1^{h,n+1},
e_1^{n+1}, v)\\
\nonumber - ((p(t_{n+1}) - p_1^{h,n+1}),\nabla \cdot v) = h(\nabla
u(t_{n+1})-\mathbb{G}_1^{\mathbb{H},n},\nabla v) - k(\rho^{n+1}, v), \\
\nonumber \mbox{ where } k\rho^{n+1} = u_t(t_{n+1}) -
\frac{u(t_{n+1}) - u(t_{n})}{k}.
\end{eqnarray}
Take $v = \frac{\phi_1^{h,n+1}-\phi_1^{h,n}}{k} =: s^{h,n+1} \in
V^h$ in (\ref{december17}). Then consider (\ref{december17}) at the
previous time level and make exactly the same choice $v = s^{h,n+1}
\in V^h$. Subtract the equations, using the Taylor expansion to
simplify the last term on the right-hand side. We obtain
\begin{eqnarray} \label{december18}
k(\frac{\eta_1^{n+1}-2\eta_1^n + \eta_1^{n-1}}{k^2},s^{h,n+1}) - (s^{h,n+1} - s^{h,n}, s^{h,n+1})\\
\nonumber + (\nu+h)k(\nabla (\frac{\eta_1^{n+1}-\eta_1^n}{k}),\nabla s^{h,n+1}) - (\nu+h)k\|\nabla s^{h,n+1}\|^2\\
\nonumber + b^{\ast}(e_1^{n+1},
u(t_{n+1}), s^{h,n+1}) + b^{\ast}(u_1^{h,n+1}, e_1^{n+1}, s^{h,n+1})\\
\nonumber - b^{\ast}(e_1^{n},
u(t_{n}), s^{h,n+1}) - b^{\ast}(u_1^{h,n}, e_1^{n}, s^{h,n+1})\\
\nonumber - k(\frac{(p(t_{n+1}) - p_1^{h,n+1}) - (p(t_{n}) -
p_1^{h,n})}{k},\nabla \cdot s^{h,n+1})\\
\nonumber = hk(\nabla \frac{u(t_{n+1}) - u(t_{n})}{k} - \frac{\mathbb{G}_1^{\mathbb{H},n} - \mathbb{G}_1^{\mathbb{H},n-1}}{k},\nabla
s^{h,n+1}) - Ck^2(\rho_t^{n+1}, s^{h,n+1}), \\
\nonumber \mbox{ where } \rho_t^{n+1} = u_{ttt}(t_{n+\theta}) \mbox{
for some } \theta \in [0,1].
\end{eqnarray}
Consider the nonlinear terms of (\ref{december18}). Adding and
subtracting $b^{\ast}(e_1^{n}, u(t_{n+1}), s^{h,n+1})$ and
$b^{\ast}(u_1^{h,n+1}, e_1^{n}, s^{h,n+1})$ gives
\begin{eqnarray} \label{december19}
b^{\ast}(e_1^{n+1}, u(t_{n+1}), s^{h,n+1}) - b^{\ast}(e_1^{n},
u(t_{n}),
s^{h,n+1})\\
\nonumber + b^{\ast}(u_1^{h,n+1}, e_1^{n+1}, s^{h,n+1}) -
b^{\ast}(u_1^{h,n}, e_1^{n}, s^{h,n+1})\\
\nonumber = [b^{\ast}(e_1^{n+1}, u(t_{n+1}), s^{h,n+1}) -
b^{\ast}(e_1^{n}, u(t_{n+1}), s^{h,n+1})\\
\nonumber + b^{\ast}(e_1^{n}, u(t_{n+1}), s^{h,n+1}) -
b^{\ast}(e_1^{n},
u(t_{n}), s^{h,n+1})]\\
\nonumber + [b^{\ast}(u_1^{h,n+1}, e_1^{n+1}, s^{h,n+1}) -
b^{\ast}(u_1^{h,n+1}, e_1^{n}, s^{h,n+1})\\
\nonumber + b^{\ast}(u_1^{h,n+1}, e_1^{n}, s^{h,n+1}) -
b^{\ast}(u_1^{h,n}, e_1^{n}, s^{h,n+1})].
\end{eqnarray}
Use the error decomposition (\ref{december15}). Since
$b^{\ast}(\cdot, s^{h,n+1}, s^{h,n+1})=0$, it follows from
(\ref{december19}) that
\begin{eqnarray} \label{december20}
b^{\ast}(e_1^{n+1}, u(t_{n+1}), s^{h,n+1}) - b^{\ast}(e_1^{n},
u(t_{n}),
s^{h,n+1})\\
\nonumber + b^{\ast}(u_1^{h,n+1}, e_1^{n+1}, s^{h,n+1}) -
b^{\ast}(u_1^{h,n}, e_1^{n}, s^{h,n+1})\\
\nonumber = kb^{\ast}(\frac{\eta_1^{n+1} - \eta_1^{n}}{k},
u(t_{n+1}),
s^{h,n+1}) - kb^{\ast}(s^{h,n+1}, u(t_{n+1}), s^{h,n+1})\\
\nonumber + kb^{\ast}(e_1^{n+1}, \frac{u(t_{n+1}) - u(t_{n})}{k},
s^{h,n+1}) + kb^{\ast}(u_1^{h,n+1}, \frac{\eta_1^{n+1} -
\eta_1^{n}}{k},
s^{h,n+1})\\
\nonumber + kb^{\ast}(\frac{u_1^{h,n+1} - u_1^{h,n}}{k}, e_1^n,
s^{h,n+1}).
\end{eqnarray}
Use the regularity of $u$ and the Cauchy-Schwarz and Young's
inequalities to obtain the bounds on the terms in
(\ref{december20}). It follows from (\ref{nonlin_usual}) that for
any $\epsilon > 0$
\begin{eqnarray} \label{december21}
k|b^{\ast}(\frac{\eta_1^{n+1} - \eta_1^{n}}{k}, u(t_{n+1}), s^{h,n+1})|\\
\nonumber \le \epsilon (\nu+h)k\|\nabla s^{h,n+1}\|^2 +
\frac{C}{\nu+h}k\|\nabla (\frac{\eta_1^{n+1} -
\eta_1^{n}}{k})\|^2.
\end{eqnarray}
For the second term on the right-hand side of (\ref{december20}) use
the regularity of $u$ and the Cauchy-Schwarz and Young's
inequalities to obtain
\begin{eqnarray} \label{december23}
k|b^{\ast}(s^{h,n+1}, u(t_{n+1}), s^{h,n+1})| \le \epsilon
(\nu+h)k\|\nabla s^{h,n+1}\|^2 \\
\nonumber + \frac{C}{\nu+h}C_u^2k\|s^{h,n+1}\|^2 +
\frac{1}{2}C_{\nabla u}k\|s^{h,n+1}\|^2.
\end{eqnarray}
The third nonlinear term on the right-hand side of
(\ref{december20}) is bounded by
\begin{eqnarray} \label{december24}
k|b^{\ast}(e_1^{n+1}, \frac{u(t_{n+1}) - u(t_{n})}{k}, s^{h,n+1})|\\
\nonumber \le \epsilon (\nu+h)k\|\nabla s^{h,n+1}\|^2 +
\frac{C}{\nu+h}k\|\nabla e_1^{n+1}\|^2.
\end{eqnarray}
For the fourth nonlinear term, add and subtract $u(t_{n+1})$ to the
first term of the trilinear form. Using (\ref{nonlin_usual}) and
Lemma \ref{nonlinear_bound} leads to
\begin{eqnarray} \label{december26}
k|b^{\ast}(u_1^{h,n+1}, \frac{\eta_1^{n+1} - \eta_1^{n}}{k},
s^{h,n+1})| \le 2\epsilon (\nu+h)k\|\nabla s^{h,n+1}\|^2\\
\nonumber + \frac{C}{\nu+h}k\|\nabla (\frac{\eta_1^{n+1} -
\eta_1^{n}}{k})\|^2 + Ck\|e_1^{n+1}\|\|\nabla e_1^{n+1}\|\|\nabla
(\frac{\eta_1^{n+1} - \eta_1^{n}}{k})\|^2.
\end{eqnarray}
For the fifth term add and subtract $u(t_{n+1})$ to the first term
of the trilinear form to obtain
\begin{eqnarray} \label{december27}
k|b^{\ast}(\frac{u_1^{h,n+1} - u_1^{h,n}}{k}, e_1^n, s^{h,n+1})| \le
k|b^{\ast}(\frac{u(t_{n+1}) - u(t_{n})}{k}, e_1^n, s^{h,n+1})|\\
\nonumber + k|b^{\ast}(\frac{\eta_1^{n+1} - \eta_1^{n}}{k}, e_1^n,
s^{h,n+1})| + k|b^{\ast}(s^{h,n+1}, e_1^n, s^{h,n+1})|.
\end{eqnarray}
Apply the result of Lemma \ref{nonlinear_bound} to the last
trilinear form in (\ref{december27}) and use the Young's inequality
with $p=\frac{4}{3}$ and $q=4$. This gives
\begin{eqnarray} \label{december28}
\lefteqn{k|b^{\ast}(\frac{u_1^{h,n+1} - u_1^{h,n}}{k}, e_1^n, s^{h,n+1})|} \nonumber\\
&\le& 3\epsilon (\nu+h)k\|\nabla s^{h,n+1}\|^2 +
\frac{C}{\nu+h}k\|\nabla e_1^n\|^2 \nonumber\\
&+& \frac{C}{\nu+h}k\|\nabla e_1^n\|^2\|\nabla
(\frac{\eta_1^{n+1} - \eta_1^{n}}{k})\|^2 +
\frac{C}{(\nu+h)^3}k\|\nabla e_1^n\|^4\|s^{h,n+1}\|^2. 
\end{eqnarray}

Applying Cauchy-Schwarz and Young’s inequalities 

\begin{eqnarray}
\lefteqn{hk(\nabla \frac{u(t_{n+1}) - u(t_{n})}{k} - \frac{\mathbb{G}_1^{\mathbb{H},n} - \mathbb{G}_1^{\mathbb{H},n-1}}{k},\nabla
s^{h,n+1})} \nonumber\\
&\leq& \frac{Ch^2 \cdot k}{\nu+h}\|\nabla \frac{u(t_{n+1}) - u(t_{n})}{k}\|^2 + \frac{Ch^2 \cdot k}{\nu+h}\|\frac{\mathbb{G}_1^{\mathbb{H},n} - \mathbb{G}_1^{\mathbb{H},n-1}}{k}\|^2 + \epsilon k(\nu+h)\|\nabla s^{h,n+1}\|^2
\end{eqnarray}

By the properties of the projection, error decomposition and the inverse inequality, the following can be found

\begin{eqnarray} \label{vms1}
\lefteqn{\|\frac{\mathbb{G}_1^{\mathbb{H},n} - \mathbb{G}_1^{\mathbb{H},n-1}}{k}\|^2 = \|P^H\nabla \frac{u_1^{h,n} - u_1^{h,n-1}}{k}\|^2} \nonumber\\
&\leq& \|\nabla \frac{u_1^{h,n} - u_1^{h,n-1}}{k}\|^2 \leq \|\nabla \frac{u(t_{n}) - u(t_{n-1})}{k}\|^2 + \|\nabla
(\frac{\eta_1^{n} - \eta_1^{n-1}}{k})\|^2 + h^{-2}\| s^{h,n}\|^2
\end{eqnarray}

Applying the Cauchy-Schwarz and Young's inequalities to
(\ref{december18}) and using the bounds
(\ref{december20})-(\ref{vms1}) give
\begin{eqnarray} \label{december29}
\frac{\|s^{h,n+1}\|^2 - \|s^{h,n}\|^2}{2} + (\nu+h)k\|\nabla
s^{h,n+1}\|^2 \\
\nonumber \le 13\epsilon (\nu+h)k\|\nabla s^{h,n+1}\|^2\\
\nonumber + \frac{C}{\nu+h}k\|\frac{\eta_1^{n+1} - 2\eta_1^{n} +
\eta_1^{n-1}}{k^2}\|_{-1}^2 + C(\nu+h)k\|\nabla
(\frac{\eta_1^{n+1} - \eta_1^{n}}{k})\|^2\\
\nonumber + \frac{C}{\nu+h}k \inf_{q^h \in
Q^h}\|\frac{p(t_{n+1}) - p(t_{n})}{k} - \frac{q^{h,n+1} - q^{h,n}}{k}\|^2\\
\nonumber + \frac{C}{\nu+h}k[\|\nabla (\frac{\eta_1^{n+1} -
\eta_1^{n}}{k})\|^2 + \|\nabla e_1^n\|^2 + \|\nabla
(\frac{\eta_1^{n+1} - \eta_1^{n}}{k})\|^2\|\nabla e_1^n\|^2]\\
\nonumber + Ck\|e_1^n\|^2\|\nabla e_1^n\|^2 + Ck\|\nabla
(\frac{\eta_1^{n+1} - \eta_1^{n}}{k})\|^4 + \frac{C}{\nu+h}k \cdot k^2\|\rho_t^{n+1}\|_{-1}^2\\
\nonumber + \frac{C}{\nu+h}k \cdot h^2\Big(\|\nabla(\frac{u(t_{n+1})-u(t_{n})}{k})\|^2 + \|\nabla \frac{u(t_{n}) - u(t_{n-1})}{k}\|^2 + \|\nabla
(\frac{\eta_1^{n} - \eta_1^{n-1}}{k})\|^2 \Big) \nonumber\\
+ \frac{C}{\nu+h}k \|s^{h,n}\|^2
+ C(C_{\nabla u} + \frac{C_u^2}{\nu+h} +
\frac{1}{(\nu+h)^3}\|\nabla e_1^n\|^4)k\|s^{h,n+1}\|^2.\nonumber
\end{eqnarray}

Since $u_{ttt} \in L^2(0,T;L^2(\Omega))$, we have
\begin{eqnarray*}
k\sum_{i=0}^n\|\rho_t^{i+1}\|_{-1}^2 \le
Ck\sum_{i=0}^n\|\rho_t^{i+1}\|^2 \le C.
\end{eqnarray*}

It follows from the assumption $k \le h$ and the result of Theorem
\ref{EAV} that
\begin{eqnarray*}
\max_{i}\|\nabla e_1^i\| \le C.
\end{eqnarray*}

Take $\epsilon = \frac{1}{26}$ in (\ref{december29}), simplify,
multiply both sides of the inequality by $2$ and sum over all time levels $n \ge 1$ to obtain
\begin{eqnarray} \label{december30}
\|s^{h,n+1}\|^2 + (\nu+h)k\sum_{i=1}^n\|\nabla s^{h,i+1}\|^2 \le
\|s^{h,1}\|^2\\
\nonumber + \frac{C}{\nu+h}k\sum_{i=1}^n[\|\frac{\eta_1^{i+1} -
2\eta_1^{i} + \eta_1^{i-1}}{k^2}\|_{-1}^2\\
\nonumber + (\nu+h)^2\|\nabla (\frac{\eta_1^{i+1} -
\eta_1^{i}}{k})\|^2 + \|\nabla (\frac{\eta_1^{i+1} -
\eta_1^{i}}{k})\|^2 \\
\nonumber + (\nu+h)\|\nabla (\frac{\eta_1^{i+1} - \eta_1^{i}}{k})\|^4\\
\nonumber + \inf_{q^h \in Q^h}\|\frac{p(t_{i+1}) - p(t_{i})}{k} -
\frac{q^{h,i+1} - q^{h,i}}{k}\|^2 + h^2 + k^2]\\
\nonumber + \frac{C}{(\nu+h)^2}k\sum_{i=1}^n(\nu+h)\|\nabla
e_1^i\|^2 + Ck\sum_{i=1}^n\|e_1^i\|^2 \\
\nonumber + \frac{C}{\nu+h} k\sum_{i=1}^n \|s^{h,i}\|^2\\
\nonumber + Ck\sum_{i=1}^n(C_{\nabla u} + \frac{C_u^2}{\nu+h} +
\frac{1}{(\nu+h)^3}\|\nabla e_1^i\|^4)\|s^{h,i+1}\|^2.
\end{eqnarray}

Since $\frac{C}{\nu+h} k\|s^{h,i+1}\|^2 \geq 0$, the following inequality holds. 

\begin{eqnarray}
\frac{C}{\nu+h} k\sum_{i=1}^n \|s^{h,i}\|^2 \leq \frac{C}{\nu+h} k\sum_{i=1}^n \|s^{h,i+1}\|^2 + \frac{C}{\nu+h} k\|s^{h,1}\|^2.
\end{eqnarray}

Substituting the last inequality in \ref{december30}, the following can be found.

\begin{eqnarray} \label{december31}
\|s^{h,n+1}\|^2 + (\nu+h)k\sum_{i=1}^n\|\nabla s^{h,i+1}\|^2 \le
(1 + \frac{C}{\nu+h} k)\|s^{h,1}\|^2\\
\nonumber + \frac{C}{\nu+h}k\sum_{i=1}^n[\|\frac{\eta_1^{i+1} -
2\eta_1^{i} + \eta_1^{i-1}}{k^2}\|_{-1}^2\\
\nonumber + (\nu+h)^2\|\nabla (\frac{\eta_1^{i+1} -
\eta_1^{i}}{k})\|^2 + \|\nabla (\frac{\eta_1^{i+1} -
\eta_1^{i}}{k})\|^2 \\
\nonumber + (\nu+h)\|\nabla (\frac{\eta_1^{i+1} - \eta_1^{i}}{k})\|^4\\
\nonumber + \inf_{q^h \in Q^h}\|\frac{p(t_{i+1}) - p(t_{i})}{k} -
\frac{q^{h,i+1} - q^{h,i}}{k}\|^2 + h^2 + k^2]\\
\nonumber + \frac{C}{(\nu+h)^2}k\sum_{i=1}^n(\nu+h)\|\nabla
e_1^i\|^2 + Ck\sum_{i=1}^n\|e_1^i\|^2 \\
\nonumber + Ck\sum_{i=1}^n(C_{\nabla u} + \frac{C_u^2 + 1}{\nu+h} +
\frac{1}{(\nu+h)^3}\|\nabla e_1^i\|^4)\|s^{h,i+1}\|^2.
\end{eqnarray}

Consider the error decomposition (\ref{december15}). Take $\tilde
u^i$ to be the $L^2$ projection of $u(t_i)$ into $V^h$, for all $i
\ge 1$. Since the mesh nodes do not depend upon the time level, it
follows from the approximation properties of $X^h,Q^h$ and the
regularity of $u,p$ that
\begin{eqnarray} \label{june94}
k\sum_{i=1}^n\|\frac{\eta_1^{i+1} - 2\eta_1^{i} +
\eta_1^{i-1}}{k^2}\|_{-1}^2 \le Ck\sum_{i=1}^n\|\frac{\eta_1^{i+1} -
2\eta_1^{i} + \eta_1^{i-1}}{k^2}\|^2 \le Ch^{2m}, \\
\nonumber k\sum_{i=1}^n\|\nabla (\frac{\eta_1^{i+1} -
\eta_1^{i}}{k})\|^2 \le Ch^{2m}, \\
\nonumber k\sum_{i=1}^n\|\nabla (\frac{\eta_1^{i+1} -
\eta_1^{i}}{k})\|^4 \le Ch^{4m}, \\
\nonumber k\sum_{i=1}^n\inf_{q^h \in Q^h}\|\frac{p(t_{i+1}) -
p(t_{i})}{k} - \frac{q^{h,i+1} - q^{h,i}}{k}\|^2 \le Ch^{2m}.
\end{eqnarray}

Using (\ref{june94}) and (\ref{december9}), we derive from
(\ref{december30}) that
\begin{eqnarray} \label{in323}
\|s^{h,n+1}\|^2 + (\nu+h)k\sum_{i=1}^n\|\nabla s^{h,i+1}\|^2 \le
(1 + \frac{C}{\nu+h} k)\|s^{h,1}\|^2\\
\nonumber + C[h^{2m} + h^2 + k^2]\\
\nonumber + Ck\sum_{i=1}^n(C_{\nabla u} + \frac{C_u^2+1}{\nu+h} +
\frac{1}{(\nu+h)^3}\|\nabla e_1^i\|^4)\|s^{h,i+1}\|^2.
\end{eqnarray}

In order to apply Gronwall's Lemma (\ref{prelim02}) in the inequality (\ref{in323}), we have to verify that \[Ck(C_{\nabla u}+\frac{C_u^2+1}{\nu+h}+\frac{1}{(\nu+h)^3}||\nabla e_1^i||^4) < 1.\]

To this end, we can first assume

\[Ck(C_{\nabla u}+\frac{C_u^2+1}{\nu+h}) < \frac{1}{2} \mbox{ and }
\frac{Ck}{(\nu+h)^3}||\nabla e_1^i||^4 < \frac{1}{2}.\]

Due to the first inequality, we have a bound on $k$ in the form
\[k < \frac{\nu+h}{CC_{\nabla u}(\nu+h)+C(C_u^2+1)}.\]

For the second inequality we investigate case by case.

\vspace{3mm}

For $k \leq h$,
it follows from the inverse inequality and theorem (\ref{EAV}) that

\begin{equation}\nonumber
\begin{split}
\frac{Ck}{(\nu+h)^3}||\nabla e_1^i||^4 \leq \frac{Ckh^{-4}}{(\nu+h)^3}||e_1^i||^4 \leq \frac{Ck}{(\nu+h)^3}(1+\frac{k}{h})^4\\
\leq \frac{Ck}{(\nu+h)^3} < \frac{1}{2}.
\end{split}
\end{equation}

Thus, we have a bound on $k$ in the form $k < C(\nu+h)^3$.

\vspace{3mm}

For $h \leq k$, it follows from the theorem (\ref{EAV}) that

$$\frac{Ck}{(\nu+h)^3}||\nabla e_1^i||^4 \leq \frac{Ck^{-1}}{(\nu+h)^5}(h^4+k^4) \leq \frac{2Ck^3}{(\nu+h)^5} < \frac{1}{2}.$$

\vspace{3mm}

It follows from the above calculations and theorem statement that

\[(C_{\nabla u}+\frac{C_u^2+1}{\nu+h}+\frac{1}{(\nu+h)^3}||\nabla e_1^i||^4)k < 1.\]

Now, we can apply discrete Gronwall's Lemma in the inequality (\ref{in323}) to have following bound

\begin{equation}\label{in324}
||\frac{\phi_1^{h,n+1}-\phi_1^{h,n}}{k}||^2+(\nu+h)k\sum_{i=1}^n||\nabla \frac{\phi_1^{h,i+1}-\phi_1^{h,i}}{k}||^2 \leq C[h^{2m}+h^2+k^2]
\end{equation}

Using the triangle inequality in the error decomposition (\ref{errordecomposition}), we obtain

\begin{equation}
||\frac{e_1^{n+1}-e_1^n}{k}||^2 + k\sum_{i=0}^n(\nu+h)||\nabla \frac{e_1^{n+1}-e_1^n}{k}||^2 \leq C[h^{2m}+h^2+k^2]
\end{equation}

This result proves the theorem.
\end{proof}


\section{Stability and Error Estimate of Correction Step Approximation}

The correction step approximation presented here is identically same with that of the reference paper \cite{AL16}. Only differences are on their first step approximations. Therefore, the stability and accuracy analysis will be the same with the reference model up to the point when the first step approximation comes into play. For this reason, we are going to copy results from this paper up to some point, and then continue proving our theorem statements from there on.

Theoretical findings below illustrate that the formulation (\ref{CSapp}) produces $O(h^2+k^2)$ accurate, unconditionally stable correction step approximation to the time-dependent Navier-Stokes equations.

We first prove stability of the correction step approximation.

\begin{theorem} [Stability of the Correction Step Approximation]\label{SCS}

Let $f \in L^2(0,T;H^{-1}(\Omega))$, let $u_1^h,u_2^h$ satisfy (\ref{VMSapp}) and (\ref{CSapp}), respectively.
Then for n=0,...,N-1,

\begin{eqnarray}
\lefteqn{||u_2^{h,n+1}||^2+5h^2\nu^{-1}(\nu+h)^{-1}||u_1^{h,n+1}||^2} \nonumber\\
&+&5h^3\nu^{-1}(\nu+h)^{-1}k\sum_{i=0}^{n+1} \big(\|\nabla  u_1^{h,i+1} - \mathbb{G}_1^{\mathbb{H},i}\|^2
+ \|\nabla  u_1^{h,i} - \mathbb{G}_1^{\mathbb{H},i}\|^2\|\big) + k\sum_{i=1}^{n+1}(\nu + h)||\nabla u_2^{h,i}||^2 \nonumber\\
&\leq& C[||u_0^s||^2+(\nu + h)^{-1}k\sum_{i=1}^{n+1}||f(t_i)||_{-1}^2]. \nonumber
\end{eqnarray}

\end{theorem}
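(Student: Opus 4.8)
The plan is to mimic the stability argument of Lemma~\ref{SAV}: test (\ref{CSapp}) with $v^h = u_2^{h,n+1} \in V^h$. Since $u_2^{h,n+1}$ is discretely divergence free the pressure term $(p_2^{h,n+1}, \nabla \cdot u_2^{h,n+1})$ vanishes, and skew-symmetry gives $b^{\ast}(u_2^{h,n+1}, u_2^{h,n+1}, u_2^{h,n+1}) = 0$. The polarization identity rewrites the time-difference term as $\tfrac{1}{2k}(\|u_2^{h,n+1}\|^2 - \|u_2^{h,n}\|^2) + \tfrac{1}{2k}\|u_2^{h,n+1} - u_2^{h,n}\|^2$, and the trapezoidal forcing $(\tfrac{f(t_{n+1})+f(t_n)}{2}, u_2^{h,n+1})$ is controlled by the dual-norm definition and Young's inequality, leaving a small multiple of $\nu\|\nabla u_2^{h,n+1}\|^2$ plus $\tfrac{1}{4\nu}(\|f(t_{n+1})\|_{-1}^2 + \|f(t_n)\|_{-1}^2)$.

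The work lies in the three predictor-driven defect terms. First I would bound the artificial-viscosity defect $h(\nabla u_1^{h,n+1}, \nabla u_2^{h,n+1})$ by Cauchy--Schwarz and Young, peeling off $\epsilon(\nu+h)\|\nabla u_2^{h,n+1}\|^2$ and leaving $\tfrac{h^2}{4\epsilon(\nu+h)}\|\nabla u_1^{h,n+1}\|^2$; this is the term whose $h^2$ dictates the weight $5h^2\nu^{-1}(\nu+h)^{-1}$ in the statement. The deferred-correction term $\tfrac{\nu k}{2}(\nabla(\tfrac{u_1^{h,n+1}-u_1^{h,n}}{k}), u_2^{h,n+1})$ is treated the same way. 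The nonlinear defect $\tfrac{1}{2}[b^{\ast}(u_1^{h,n+1}, u_1^{h,n+1}, u_2^{h,n+1}) - b^{\ast}(u_1^{h,n}, u_1^{h,n}, u_2^{h,n+1})]$ is bounded term by term with (\ref{nonlin_usual}), again absorbing $\epsilon(\nu+h)\|\nabla u_2^{h,n+1}\|^2$ and leaving contributions in $\|\nabla u_1^{h,n+1}\|^2$, $\|\nabla u_1^{h,n}\|^2$ and their fourth powers.

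Next I would sum over $n$, multiply by $2k$, and choose the $\epsilon$'s (e.g.\ $\epsilon = \tfrac{1}{10}$) so that the subtracted multiples of $(\nu+h)\|\nabla u_2^{h,n+1}\|^2$ leave a positive coefficient on $k\sum (\nu+h)\|\nabla u_2^{h,i}\|^2$. The right-hand side still carries $\tfrac{h^2}{2\epsilon(\nu+h)}k\sum\|\nabla u_1^{h,i}\|^2$ and the weighted norms of the predictor. The decisive step is to add $5h^2\nu^{-1}(\nu+h)^{-1}$ times the first-step stability estimate of Lemma~\ref{SAV}: its dissipation $\nu k\sum\|\nabla u_1^{h,i}\|^2$ is thereby scaled to $5h^2(\nu+h)^{-1}k\sum\|\nabla u_1^{h,i}\|^2$, which dominates the leftover $u_1$-gradient sum, while its remaining left-hand side contributes exactly $5h^2\nu^{-1}(\nu+h)^{-1}\|u_1^{h,n+1}\|^2$ and $5h^3\nu^{-1}(\nu+h)^{-1}k\sum(\|\nabla u_1^{h,i+1} - \mathbb{G}_1^{\mathbb{H},i}\|^2 + \|\nabla u_1^{h,i} - \mathbb{G}_1^{\mathbb{H},i}\|^2)$ as displayed in the statement (the remaining nonnegative terms, e.g.\ $5h^3\nu^{-1}(\nu+h)^{-1}\|\nabla u_1^{h,n+1}\|^2$, are simply dropped). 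The right-hand side of Lemma~\ref{SAV} then folds every predictor quantity into $C[\|u^{s,0}\|^2 + (\nu+h)^{-1}k\sum\|f(t_i)\|_{-1}^2]$.

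I expect the nonlinear defect to be the main obstacle. Bounded crudely, $|b^{\ast}(u_1^{h,n+1}, u_1^{h,n+1}, u_2^{h,n+1})| \le \epsilon(\nu+h)\|\nabla u_2^{h,n+1}\|^2 + \tfrac{C}{\epsilon(\nu+h)}\|\nabla u_1^{h,n+1}\|^4$, and the quartic $k\sum\|\nabla u_1^{h,i}\|^4$ is not linearly controlled by Lemma~\ref{SAV}. It must instead be tamed through the $h$-weighted bound $\|\nabla u_1^{h,i}\| \le Ch^{-1/2}$ that follows from $h\|\nabla u_1^{h,i}\|^2 \le \text{(data)}$ in Lemma~\ref{SAV}, reducing $k\sum\|\nabla u_1^{h,i}\|^4$ to a multiple of $k\sum\|\nabla u_1^{h,i}\|^2$ at the price of a constant $C$ that depends on the data norms. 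Carrying out this absorption while preserving the precise powers of $h$ and $\nu$ that produce the weights $5h^2\nu^{-1}(\nu+h)^{-1}$ and $5h^3\nu^{-1}(\nu+h)^{-1}$ is the delicate bookkeeping the proof must complete.
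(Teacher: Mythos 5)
Your scaffolding agrees with the paper's up to the nonlinear term: testing (\ref{CSapp}) with $u_2^{h,n+1}$, killing the pressure and the $u_2$-nonlinearity, absorbing the trapezoidal forcing and the defect $h(\nabla u_1^{h,n+1},\nabla u_2^{h,n+1})$ by Young's inequality, and then folding in a $5h^2\nu^{-1}(\nu+h)^{-1}$-weighted copy of Lemma \ref{SAV}, whose left-hand side supplies the $\|u_1^{h,n+1}\|^2$ and $\|\nabla u_1^{h,i}-\mathbb{G}_1^{\mathbb{H},i}\|^2$ terms of the statement while its dissipation $\nu k\sum\|\nabla u_1^{h,i}\|^2$ absorbs the leftover predictor gradients. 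This is exactly the mechanism in the paper, which imports inequality (4.4) of \cite{AL16} and then substitutes Lemma \ref{SAV}.

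The genuine gap is your treatment of the nonlinear defect. The paper does not bound $b^{\ast}(u_1^{h,n+1},u_1^{h,n+1},v)$ and $b^{\ast}(u_1^{h,n},u_1^{h,n},v)$ separately; it exploits the difference structure,
\begin{equation*}
b^{\ast}(u_1^{h,n+1},u_1^{h,n+1},v)-b^{\ast}(u_1^{h,n},u_1^{h,n},v)
= k\,b^{\ast}\big(\tfrac{u_1^{h,n+1}-u_1^{h,n}}{k},u_1^{h,n+1},v\big)+k\,b^{\ast}\big(u_1^{h,n},\tfrac{u_1^{h,n+1}-u_1^{h,n}}{k},v\big),
\end{equation*}
splits $\tfrac{u_1^{h,n+1}-u_1^{h,n}}{k}=\tfrac{u(t_{n+1})-u(t_n)}{k}-\tfrac{e_1^{n+1}-e_1^{n}}{k}$, and controls the two pieces by the regularity constant $C_{\nabla u_t}$ and by Theorem \ref{dedt2}; this is precisely why $\|\nabla(\frac{e_1^{n+1}-e_1^{n}}{k})\|^2$ and $C_{\nabla u_t}^2$ appear in (\ref{in44}) and why the proof invokes Theorem \ref{dedt2} alongside Lemma \ref{SAV}. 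Each resulting contribution is a product of a quantity summable by Theorem \ref{dedt2} with $\nu k\|\nabla u_1^{h,i}\|^2$, which Lemma \ref{SAV} bounds by the data, so the final estimate is linear in the data with a constant that does not degenerate as $h\to 0$. Your alternative---term-by-term bounds giving quartics $k\sum\|\nabla u_1^{h,i}\|^4$, tamed by $\|\nabla u_1^{h,i}\|^2\le h^{-1}(\mathrm{data})$ from Lemma \ref{SAV}---yields a right-hand side of order $h^{-1}\nu^{-1}(\nu+h)^{-1}(\mathrm{data})^2$: quadratic in the data and blowing up under mesh refinement, so it cannot produce the stated bound $C[\|u_0^s\|^2+(\nu+h)^{-1}k\sum\|f(t_i)\|_{-1}^2]$ with a constant independent of $h$. (The cost of the paper's route, which your data-only plan was implicitly trying to avoid, is that this ``stability'' theorem silently inherits the regularity and time-step hypotheses of Theorem \ref{dedt2}; but as proposed, the data-only argument does not close.)
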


\begin{proof}
From the inequality (4.4) in \cite{AL16}, we have

\begin{equation}\label{in44}
\begin{split}
\frac{1}{2k}(||u_2^{h,n+1}||^2-||u_2^{h,n}||^2)+\frac{1}{2}(\nu + h)||\nabla u_2^{h,n+1}||^2 
\\
\leq \frac{5}{2(\nu+h)}||\frac{f(t_{n+1})-f(t_n)}{2}||_{-1}^2 
\\
+ \frac{5\nu^2k^2}{4(\nu + h)}C_{\nabla u_t}^2+\frac{5\nu^2k}{4(\nu+h)^2}k
(\nu + h)||\nabla(\frac{e_1^{n+1}-e_1^{n}}{k})||^2
\\
+\frac{5h^2}{2\nu(\nu + h)}\nu||\nabla u_1^{h,n+1}||^2
\\
+
\frac{5}{4\nu(\nu+h)^2}(\nu+h)k||\nabla(\frac{e_1^{h,n+1}-e_1^{h,n}}{k})||^2[\nu k||\nabla u_1^{h,n+1}||^2 + \nu k||\nabla u_1^{h,n}||^2]
\\
+\frac{5}{4\nu(\nu + h)}kC_{\nabla u_t}^2[\nu k||\nabla u_1^{h,n+1}||^2+\nu k||\nabla u_1^{h,n}||^2].
\end{split}
\end{equation}

Multiplying inequality by 2k and summing over all time levels followed by Lemma (\ref{SAV}) and Theorem (\ref{dedt2}) give

\begin{equation}
\begin{split}
||u_2^{h,n+1}||^2+k\sum_{i=1}^{n+1}(\nu + h)||\nabla u_2^{h,i}||^2
\\
\leq ||u_0^s||^2+ \frac{5}{(\nu + h)}k\sum_{i=1}^{n+1}||\frac{f(t_{i})-f(t_{i-1})}{2}||_{-1}^2 
\\
+\frac{5\nu^2k^3}{2(\nu + h)}C_{\nabla u_t}^2+\frac{5\nu^2k^2}{2(\nu + h)^2}C(h^{2m}+h^2+k^2)
\\
+\frac{5h^2}{\nu(\nu + h)}\Big[||u_0^s||^2+ h\|\nabla  u^{s,0}\|^2 -||u_1^{h,n+1}||^2\\
-hk\sum_{i=0}^{n+1} \big(\|\nabla  u_1^{h,i+1} - \mathbb{G}_1^{\mathbb{H},i}\|^2
+ \|\nabla  u_1^{h,i} - \mathbb{G}_1^{\mathbb{H},i}\|^2\|\big) +\frac{1}{\nu + h}k\sum_{i=1}^{n+1}||f(t_i)||_{-1}^2\Big]
\\
+\frac{5}{2\nu(\nu + h)}\big(\frac{h^{2m}+h^2+k^2}{\nu+h}+k^2C_{\nabla u_t}^2\big)
\Big[2||u_0^s||^2 +2h\|\nabla  u^{s,0}\|^2
\\
+\frac{1}{\nu + h}k\sum_{i=1}^{n+1}||f(t_i)||_{-1}^2+\frac{1}{\nu + h}k\sum_{i=1}^{n}||f(t_i)||_{-1}^2\Big].
\end{split}
\end{equation}

After some algebraic manipulation, we have the following inequality

\begin{equation}
\begin{split}
||u_2^{h,n+1}||^2+\frac{5h^2}{\nu(\nu + h)}||u_1^{h,n+1}||^2+\sum_{i=1}^{n+1}(\nu + h)||\nabla u_2^{h,i}||^2
\\
+\frac{5h^3}{\nu(\nu + h)}k\sum_{i=0}^{n+1} \big(\|\nabla  u_1^{h,i+1} - \mathbb{G}_1^{\mathbb{H},i}\|^2
+ \|\nabla  u_1^{h,i} - \mathbb{G}_1^{\mathbb{H},i}\|^2\|\big)
\\
\leq
||u_0^s||^2+ \frac{5}{(\nu+h)}k\sum_{i=1}^{n+1}||\frac{f(t_{i})-f(t_{i-1})}{2}||_{-1}^2 \\
+
\frac{5\nu^2k^3}{2(\nu + h)}C_{\nabla u_t}^2+\frac{5\nu^2k^2}{2(\nu + h)^2}C(h^{2m}+h^2+k^2)\\
+
C(||u_0^s||^2+h\|\nabla  u^{s,0}\|^2+\frac{1}{\nu + h}k\sum_{i=1}^{n+1}||f(t_i)||_{-1}^2).
\end{split}
\end{equation}
The last inequality implies the theorem statement.
\end{proof}

Theorem (\ref{SCS}) together with the Proposition (\ref{SSP}) proves the unconditional stability of both $u_1^{h,i}$ and $u_2^{h,i}$ for any $i \geq 0$.

The error estimate of the correction step approximation is given next.

\begin{theorem}[Error Estimate of Correction Step Approximation]\label{ECS}
Let the assumptions of Theorem (\ref{dedt2}) be satisfied.
Let $$k < \frac{\nu + h}{(\nu + h)C_{\nabla u}+2C_u^2+(\nu + h)Ch^{m-1}+2Ch^{2m}}. $$
Then there exists a constant $C=C(\Omega,T,u,p,f,(\nu + h)^{-1}),$ such that

\begin{equation}\nonumber
\begin{split}
\max_{1\leq i \leq N}||u(t_i)-u_2^{h,i}|| + (k\sum_{i=0}^n(\nu + h)||\nabla(u(t_i)-u_2^{h,i}) ||^2)^{1/2}\\
\leq
C(h^{m}+h^2+k^2+hk).
\end{split}
\end{equation}

\end{theorem}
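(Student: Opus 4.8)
The plan is to derive an error equation for $e_2^{n+1}:=u(t_{n+1})-u_2^{h,n+1}$ by comparing the variational Navier--Stokes equation against the discrete correction scheme (\ref{CSapp}), and then to run the same energy argument used for the first step. First I would test the continuous NSE against $v\in V^h$ and Taylor-expand about the midpoint $t_{n+1/2}$, so that the averaged forcing $\tfrac12(f(t_{n+1})+f(t_n))$ and the backward difference $(u(t_{n+1})-u(t_n))/k$ are both consistent to $O(k^2)$, while the viscous and nonlinear terms, evaluated at $t_{n+1}$ in (\ref{CSapp}), are restored to second order precisely by the deferred-correction terms $\tfrac{\nu k}{2}(\nabla\tfrac{u_1^{h,n+1}-u_1^{h,n}}{k},v)$, $\tfrac12[b^{\ast}(u_1^{h,n+1},u_1^{h,n+1},v)-b^{\ast}(u_1^{h,n},u_1^{h,n},v)]$, and $h(\nabla u_1^{h,n+1},\nabla v)$, the last of which cancels the artificial viscosity $(\nu+h)$ down to $\nu$ on the leading term. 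Subtracting the two equations yields an identity for $e_2^{n+1}$ whose right-hand side splits into pure $O(k^2)$ Taylor remainders, projection/consistency terms of size $O(h^m)$, and defect terms built from the first-step errors $e_1^i$ and the discrete time derivative $(e_1^{n+1}-e_1^n)/k$.

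Following the decomposition (\ref{errordecomposition}) I would write $e_2^{n+1}=\eta_2^{n+1}-\phi_2^{h,n+1}$ with $\tilde u^i$ the $L^2$-projection into $V^h$, and test with $v=\phi_2^{h,n+1}\in V^h$; this annihilates the pressure difference since $V^h\perp Q^h$ and kills the diagonal nonlinear term because $b^{\ast}(\cdot,\phi_2^{h,n+1},\phi_2^{h,n+1})=0$. The nonlinear contributions are treated exactly as in the proof of Theorem \ref{EAV}: add and subtract $u(t_{n+1})$ in the first slot, bound the generic terms with (\ref{nonlin_usual}) and the sharper estimate of Lemma \ref{nonlinear_bound}, and apply Young's inequality (with exponents $\tfrac43$ and $4$ on the critical cubic term) to absorb copies of $(\nu+h)\|\nabla\phi_2^{h,n+1}\|^2$ on the left, exposing a Gronwall multiplier that, after inserting the projection and first-step bounds, takes the form $C(C_{\nabla u}+\tfrac{C_u^2}{\nu+h}+Ch^{m-1}+Ch^{2m})$ matching the stated restriction on $k$. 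The essential new input is that every defect term is now controlled by the SAV first-step estimates rather than the AV ones: the residual viscous correction reduces to $h(\nabla e_1^{n+1},\nabla\phi_2^{h,n+1})$, and by Cauchy--Schwarz, Young, and the gradient bound $\|\nabla e_1^{n+1}\|\lesssim h^m+H^m h+k$ from Theorem \ref{EAV} this produces the characteristic $hk$ term (alongside higher-order $h^{m+1}$ and $hH^m$ pieces); the term carrying $(e_1^{n+1}-e_1^n)/k$ is absorbed using Theorem \ref{dedt2}, and the remaining first-step quantities are bounded through Theorem \ref{EAV}.

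I would then sum over $n$, multiply by $2k$, and use the approximation properties (\ref{interp1})--(\ref{interp2}) together with the regularity hypotheses to replace the $\eta_2$ quantities by their $O(h^{2m})$ summed bounds and the first-step quantities by $O(h^{2m}+h^2+k^2)$ from Theorems \ref{EAV}--\ref{dedt2}. Before invoking the discrete Gronwall Lemma \ref{prelim02} one must check the admissibility condition $k\gamma_\mu<1$, and this is exactly where the restriction $k<(\nu+h)\big/\big((\nu+h)C_{\nabla u}+2C_u^2+(\nu+h)Ch^{m-1}+2Ch^{2m}\big)$ enters. The Gronwall estimate then controls $\|\phi_2^{h,n+1}\|$ and $k\sum(\nu+h)\|\nabla\phi_2^{h,i+1}\|^2$, and the triangle inequality with the projection bounds converts this into the asserted estimate for $e_2$, giving the rate $O(h^m+h^2+k^2+hk)$.

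The main obstacle is the bookkeeping of the defect terms so that the first-step errors enter with the correct powers of $h$ and $k$: one must ensure that the $h$-scaled viscosity correction contributes $hk$ rather than $h$ or $k$ in isolation, and that the time-derivative defect is matched to Theorem \ref{dedt2} with the right $(\nu+h)^{-1}$ weights, since any mismatch there would degrade the temporal order from $k^2$ to $k$. A secondary delicate point is verifying the Gronwall admissibility uniformly, which (as in the proof of Theorem \ref{dedt2}) requires bounding $\|\nabla e_1^i\|$ via the inverse inequality separately in the regimes $k\le h$ and $h\le k$.
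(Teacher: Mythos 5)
Your proposal is correct and follows essentially the same route as the paper: error decomposition with the $L^2$-projection into $V^h$, testing with $\phi_2^{h,n+1}$, bounding the defect terms via the SAV first-step estimates of Theorems \ref{EAV} and \ref{dedt2} (which is precisely where the $hk$ and $k^2$ contributions arise), discrete Gronwall under the stated time-step restriction, and the triangle inequality. The only difference is presentational: since the correction step is identical to that of AV-DDC, the paper imports the correction-step energy inequality (inequality (4.17) of \cite{AL16}) by citation and only proves the part where the first-step approximation enters, whereas you rederive that energy inequality from scratch before inserting the new first-step bounds.
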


\begin{proof}
From the inequality (4.17) in \cite{AL16}, we have

\begin{equation}\label{in417}
\begin{split}
||\phi_2^{h,n+1}||^2+(\nu + h)k\sum_{i=0}^n||\nabla \phi_2^{h,i+1}||^2
\\
\leq \frac{C}{\nu + h}k\sum_{i=0}^n\Big[\inf_{q^h\in Q^h}||\frac{p^{h,i+1}+p^{h,i}}{2}-q^{h,i+1}||^2
\\
k^2||\nabla(\frac{e_1^{i+1}-e_1^{i+1}}{k})||^2
+h^2||\nabla e_1^{i+1}||^2+k^4
+||\frac{\eta_2^{i+1}-\eta_2^{i}}{k}||_{-1}^2
\\
+||\nabla \eta_2^{i+1}||^2
+k^2||\nabla e_1^{i+1}||^2
+||\nabla \eta_2^{i+1}||^4
\\
+k||\nabla(\frac{e_1^{i+1}-e_1^{i+1}}{k})||^2(k||\nabla e_1^{i+1}||^2+k||\nabla e_1^{i}||^2)
\\
+k\sum_{i=0}^n ||\phi_2^{h,i+1}||^2
\Big[
\frac{C_{\nabla u}}{2}+\frac{2C_{u}^2}{(\nu + h)}+\frac{1}{2}||\nabla \eta_2^{i+1}||
\\
+\frac{2}{\nu + h}||\nabla \eta_2^{i+1}||^2
\Big]+||\phi_2^{h,0}||^2
\end{split}
\end{equation}

Take $\tilde{u}^i$ in the error decomposition (\ref{errordecomposition}) to be the $L^2$-projection onto $V^h$, for $i \geq 1$. Take $\tilde{u}^0$ to be $u_0^s$. This gives $\phi_2^{h,0}=0$ and $e_1^0=\eta_2^0$. Also it follows from the Proposition (\ref{ESP}) that $||\eta_2^0|| \leq Ch^m$; under the assumption of the theorem applying the discrete Gronwall's lemma (\ref{prelim02}) and using bounds in theorems (\ref{EAV}), (\ref{dedt2}), give

\begin{equation}\label{in418}
\begin{split}
||\phi_2^{h,n+1}||^2+(\nu + h)k\sum_{i=0}^n||\nabla \phi_2^{h,i+1}||^2\\
\leq
\frac{C}{\nu + h}k\sum_{i=0}^n\Big[
\inf_{q^h\in Q^h}||\frac{p^{h,i+1}+p^{h,i}}{2}-q^{h,i+1}||^2
\\
+\frac{k^2}{\nu + h}(h^2+k^2)
+\frac{h^2}{\nu + h}(h^2+H^{2m}h^2+k^2)+k^4
\\
+||\frac{\eta_2^{i+1}-\eta_2^{i}}{k}||_{-1}^2
+||\nabla \eta_2^{i+1}||^2
+||\nabla \eta_2^{i+1}||^4
\\
+\frac{1}{(\nu + h)^2}(h^2+k^2)(h^2+H^{2m}h^2+k^2) \Big]+Ch^{2m}
\end{split}
\end{equation}

\vspace{3mm}

Use the approximation properties of $X^h, Q^h.$ Since the mesh nodes do not depend upon the time level, it follows from (\ref{interp1}), (\ref{interp2}) that

\begin{equation}\label{in419}
\begin{split}
k\sum_{i=0}^n \inf_{q^h\in Q^h}||\frac{p^{h,i+1}+p^{h,i}}{2}-q^{h,i+1}||^2 \leq Ch^{2m},\\
k\sum_{i=0}^n ||\frac{\eta_2^{i+1}-\eta_2^{i}}{k}||_{-1}^2 \leq
Ck\sum_{i=0}^n ||\frac{\eta_2^{i+1}-\eta_2^{i}}{k}||^2
\leq
Ch^{2m},\\
k\sum_{i=0}^n||\eta_2^{i+1}||^2
\leq
Ch^{2m}.
\end{split}
\end{equation}

Bounds (\ref{in418}) and (\ref{in419}) give the following result

\begin{equation}\label{in420}
\begin{split}
||\phi_2^{h,n+1}||^2+(\nu + h)k\sum_{i=0}^n||\nabla \phi_2^{h,i+1}||^2
\\
\leq
\frac{C}{(\nu+h)^2}(h^{2m}+H^{2m}(h^4+h^2k^2)+h^4+k^4+h^2k^2).
\end{split}
\end{equation}

Using the error decomposition and triangle inequality with (\ref{in420}), we obtain

\begin{equation}\label{in421}
\begin{split}
||e_2^{h,n+1}||+((\nu + h)k\sum_{i=0}^n||\nabla e_2^{h,i+1}||^2)^\frac{1}{2}\\
\leq
\frac{C}{(\nu + h)}(h^{m}+H^{m}(h^2+hk)+h^2+k^2+hk).
\end{split}
\end{equation}
\end{proof}
This proves the Theorem statement. Therefore, we derived the error estimates, which agree with the general theory of the defect and deferred correction methods. Clearly, the correction step approximation $u_2^h$ lifts the accuracy of an order of h in space and of k in time, compared to the first step approximation $u_1^h$.

Some computational results will be given next.


\section{Computational Tests}

We perform one quantitative and one qualitative comparison test of SAV-DDC and AV-DDC models. Computational results with both tests not only support the theoretical findings of this paper but also illustrate superiority of SAV-DDC over AV-DDC.

Firstly, consider a manufactured true solution of NSE in $\Omega = [0,1]^2$ given by
\begin{eqnarray} \label{true}
u_1(x,y,t) &=& e^{-t}cos(2\pi(y-t)), \nonumber\\
u_2(x,y,t) &=& e^{-t}sin(2\pi(x-t)), \nonumber\\
p(x,y,t) &=& 0. \nonumber
\end{eqnarray}
The forcing function $f(x,y,t)$, the initial condition $u(x,y,0)$ and non-homogeneous boundary conditions are computed to comply with the given exact solution. Computations are ended at the final time $T=1$. The computations have been performed using the Taylor-Hood finite element space (P2/P1) for velocity and pressure pair, and also piecewise linear finite element space (P1) for the large scale space on the same mesh instead of piecewise quadratic finite element space (P2) on a different coarse mesh, see \cite{JK05}.

In particular, the exact solution is a rotational flow that moves along the line $y=x$ with a maximum velocity of 1 in each direction. Therefore, we choose the time step size as half of the mesh size, $\Delta t = h/2$; a possible analogue of the well-known CFL condition \cite{CFL23}. Also the additional viscosities in each case has been chosen equal to the time step size, and all these quantities have been refined together to observe convergence rates of the models.

The convergence rates in Tables \ref{table2}-\ref{table4} verify Theorems (\ref{EAV}) and (\ref{ECS}); the first step approximations produces first order of accuracy while the correction step approximation gives a second order of accuracy.

Comparing the first step approximations of each model, we observe that the convergence rates in the first step of AV-DDC has an asymptotic behaviour while that of SAV-DDC directly produces first order of accuracy with a better error estimate. On the other hand, defect-deferred correction methods rely mostly on the accuracy of the first step approximations. Therefore we can clearly conclude that employing SAV on the first step of defect-deferred correction methods contributes the overall accuracy of the correction step approximation. Also the computational results below show this expectation has been met.

For the first(i=1) and the correction(i=2) step approximations, define errors by:

\begin{eqnarray}
||e_i||_{L^2} = ||\,u_i-u^h\,||_{L^2(0,T;L^2(\Omega))}, \nonumber\\
||e_i||_{H^1} = ||\,u_i-u^h\,||_{L^2(0,T;H^1(\Omega))}. \nonumber
\end{eqnarray}

\begin{table}[H]
\caption{Errors and Convergence Rates(CR) with AV-DDC, $\nu=0.1$. \label{table1}}
\begin{tabular}{c|c|c|c|c||c|c|c|c|}
\cline{2-9}
                         & \multicolumn{4}{c||}{First Step} & \multicolumn{4}{c|}{Correction Step} \\ \hline
\multicolumn{1}{|l|}{$1/h$}  & $||e_1||_{L^2}$  & CR & $||e_1||_{H^1}$ & CR & $||e_2||_{L^2}$ & CR & $||e_2||_{H^1}$ & CR \\ \hline
\multicolumn{1}{|l|}{4}  & 0.155917         & -    & 1.44949         & -    & 0.0847247       & -    & 0.915718        & -    \\ \hline
\multicolumn{1}{|l|}{8}  & 0.103376         & 0.59 & 0.91002         & 0.67 & 0.0356702       & 1.25 & 0.346105        & 1.40 \\ \hline
\multicolumn{1}{|l|}{16} & 0.0618065        & 0.74 & 0.5425          & 0.75 & 0.0125766       & 1.50 & 0.118857        & 1.54 \\ \hline
\multicolumn{1}{|l|}{32} & 0.0341708        & 0.86 & 0.301312        & 0.85 & 0.0038449       & 1.71 & 0.0362162       & 1.71 \\ \hline
\end{tabular}
\end{table}

\begin{table}[H]
\caption{Errors and Convergence Rates(CR) with SAV-DDC, $\nu=0.1$. \label{table2}}
\begin{tabular}{c|c|c|c|c||c|c|c|c|}
\cline{2-9}
                         & \multicolumn{4}{c||}{First Step} & \multicolumn{4}{c|}{Correction Step} \\ \hline
\multicolumn{1}{|l|}{$1/h$}  & $||e_1||_{L^2}$  & CR & $||e_1||_{H^1}$ & CR & $||e_2||_{L^2}$ & CR & $||e_2||_{H^1}$ & CR \\ \hline
\multicolumn{1}{|l|}{4}  & 0.160372         & -    & 1.4644          & -    & 0.0899792       & -    & 0.948452        & -    \\ \hline
\multicolumn{1}{|l|}{8}  & 0.0701028        & 1.19 & 0.616771        & 1.25 & 0.0255807       & 1.81 & 0.262797        & 1.85 \\ \hline
\multicolumn{1}{|l|}{16} & 0.0306962        & 1.19 & 0.267739        & 1.20 & 0.00655849      & 1.96 & 0.0672375       & 1.97 \\ \hline
\multicolumn{1}{|l|}{32} & 0.0142972        & 1.10 & 0.124943        & 1.10 & 0.00166068      & 1.98 & 0.0170454       & 1.98 \\ \hline
\end{tabular}
\end{table}

\begin{table}[H]
\caption{Errors and Convergence Rates(CR) with AV-DDC, $\nu=0.01$. \label{table3}}
\begin{tabular}{c|c|c|c|c||c|c|c|c|}
\cline{2-9}
                         & \multicolumn{4}{c||}{First Step} & \multicolumn{4}{c|}{Correction Step} \\ \hline
\multicolumn{1}{|l|}{$1/h$}  & $||e_1||_{L^2}$  & CR & $||e_1||_{H^1}$ & CR & $||e_2||_{L^2}$ & CR & $||e_2||_{H^1}$ & CR \\ \hline
\multicolumn{1}{|l|}{4}  & 0.229077         & -    & 2.09789         & -    & 0.165639       & -    & 1.60463        & -    \\ \hline
\multicolumn{1}{|l|}{8}  & 0.175243         & 0.39 & 1.59616         & 0.39 & 0.105312       & 0.65 & 1.01741        & 0.66 \\ \hline
\multicolumn{1}{|l|}{16} & 0.118254         & 0.57 & 1.10838         & 0.53 & 0.0530586      & 0.99 & 0.557947       & 0.87 \\ \hline
\multicolumn{1}{|l|}{32} & 0.0714289        & 0.73 & 0.697369        & 0.67 & 0.0214237      & 1.31 & 0.261501       & 1.09 \\ \hline
\multicolumn{1}{|l|}{64} & 0.0399438        & 0.84 & 0.407656        & 0.77 & 0.00747879     & 1.52 & 0.105783       & 1.31 \\ \hline
\end{tabular}
\end{table}

\begin{table}[H]
\caption{Errors and Convergence Rates(CR) with SAV-DDC, $\nu=0.01$. \label{table4}}
\begin{tabular}{c|c|c|c|c||c|c|c|c|}
\cline{2-9}
                         & \multicolumn{4}{c||}{First Step} & \multicolumn{4}{c|}{Correction Step} \\ \hline
\multicolumn{1}{|l|}{$1/h$}  & $||e_1||_{L^2}$  & CR & $||e_1||_{H^1}$ & CR & $||e_2||_{L^2}$ & CR & $||e_2||_{H^1}$ & CR \\ \hline
\multicolumn{1}{|l|}{4}  & 0.304062         & -    & 2.6915          & -    & 0.252518       & -    & 2.29169        & -    \\ \hline
\multicolumn{1}{|l|}{8}  & 0.157858         & 0.94 & 1.47629         & 0.87 & 0.109739       & 1.20 & 1.0805         & 1.08 \\ \hline
\multicolumn{1}{|l|}{16} & 0.0743467        & 1.09 & 0.761911        & 0.95 & 0.0377188      & 1.54 & 0.453725       & 1.25 \\ \hline
\multicolumn{1}{|l|}{32} & 0.0353496        & 1.07 & 0.377853        & 1.01 & 0.0116789      & 1.69 & 0.166719       & 1.44 \\ \hline
\multicolumn{1}{|l|}{64} & 0.0171519        & 1.04 & 0.185847        & 1.02 & 0.00340097     & 1.78 & 0.054127       & 1.62 \\ \hline
\end{tabular}
\end{table}

For the qualitative testing, flow past a forward-backward facing step is considered. A $40\times10$ rectangular domain is used as the channel, and a $1\times1$ step is placed at the bottom of the channel, 5 units in. No-slip boundary conditions are strongly enforced on the walls of the channel and on the step, while parabolic inflow with maximum inlet 1 is introduced on the inflow boundary. Also on the outflow, 'do nothing' boundary condition is weakly enforced. The initial condition is set to be parabolic flow across the channel, and there is no external forcing, $f=0$. Viscosity $\nu=1/600$ is chosen in particular. For this setup the expected behavior is recirculating vortex formations behind the step and their detachment, see \cite{LMNR08},\cite{CHOR17},\cite{AKL19}. 

This comparison test is performed on the same coarse mesh (the smallest h=0.125) for both methods, and choose additional viscosity is equal to the time step size $\Delta t = 0.05$. Computations have been ended at the final time $T=40$. 

Figures \ref{fig1}-\ref{fig2} illustrate both method produces stable results. On the other hand, AV-DDC is too dissipative to capture vortex detachment, i.e. eddies which should detach and evolve remain attached and attain steady state, while SAV-DDC is able to reliably met with expectations of the problem setup and replicates the behavior of the flow given in the reference papers \cite{LMNR08},\cite{CHOR17},\cite{AKL19}. This test clearly shows that SAV-DDC is not over-dissipative as AV-DDC is, and hence, is able to capture turbulent characteristics of the flow better than AV-DDC.

\begin{figure}[H]
\centering     
\includegraphics[width=11.5cm,keepaspectratio]{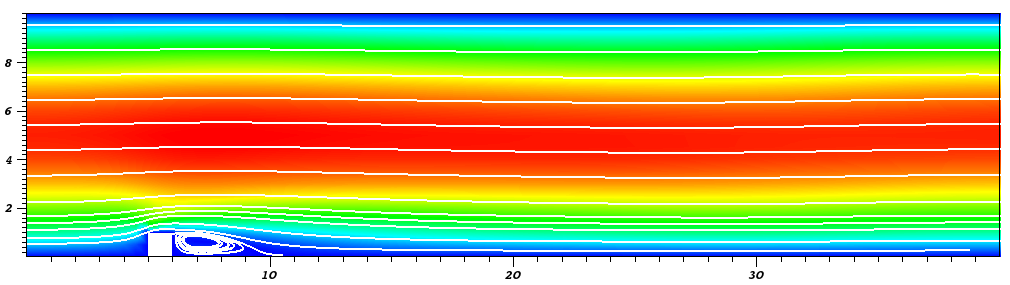}
\caption{AV-DDC}
\label{fig1}
\end{figure}

\begin{figure}[H]
\centering     
\includegraphics[width=11.5cm,keepaspectratio]{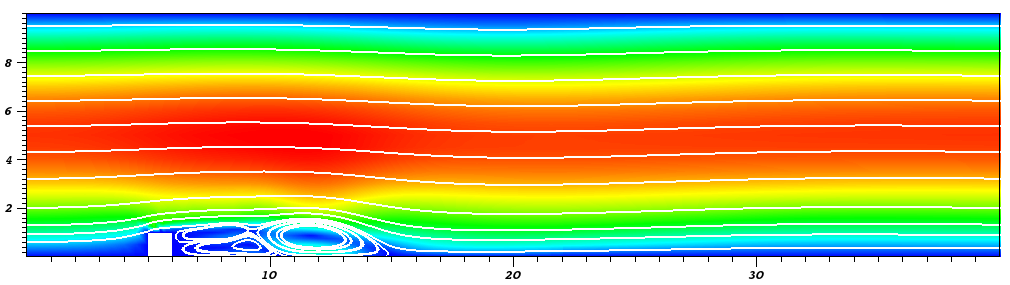}
\caption{SAV-DDC}
\label{fig2}
\end{figure}

Although the correction step approximations are computed with the same weak formulation, the first step approximation plays a great role in how accurate results they will give and how well the flow will be resolved.

\section{Conclusion}
The method presented here replaces the artificial viscosity approximation step of the defect-deferred correction method with an alternative to a projection-based subgrid artificial viscosity approximation. This alternative approach has both theoretically and computationally shown its superiority over conventional artificical viscosity approximation based defect-deferred correction method.

\end{document}